\theoremstyle{plain}
\newtheorem{theorem}{Theorem}[section]
\newtheorem{lemma}[theorem]{Lemma}
\newtheorem{proposition}[theorem]{Proposition}
\newtheorem{definition}[theorem]{Definition}
\newtheorem*{theorem*}{Theorem}
\newtheorem{propdef}[theorem]{Proposition/Definition}
\theoremstyle{remark}
\newtheorem{remark}[theorem]{Remark}
\newtheorem{example}[theorem]{Example}
\numberwithin{equation}{section}
\def\N{{\mathbf N}}
\def\Z{{\mathbf Z}}
\def\R{{\mathbf R}}
\newcommand{\cM}{{\mathcal M}}
\newcommand{\eps}{\varepsilon}
\renewcommand{\phi}{\varphi}
\newcommand{\ip}[1]{\langle {#1}\rangle}
\DeclareMathOperator{\Lip}{Lip}
\newcommand{\Lipn}{\Lip_N}
\DeclareMathOperator{\diam}{diam}
\newcommand{\beq}{\begin{equation}}
\newcommand{\eeq}{\end{equation}}
\newcommand{\bal}{\begin{aligned}}
\newcommand{\eal}{\end{aligned}}
\newcommand{\ben}{\begin{enumerate}}
\newcommand{\beni} {\begin{enumerate}[(i)]}
\newcommand{\een}{\end{enumerate}}
\newcommand{\bit}{\begin{itemize}}
\newcommand{\eit}{\end{itemize}}
\newcommand{\beqw}{\begin{equation*}}
\newcommand{\eeqw}{\end{equation*}}
\newcommand{\bex}{\begin{example}}
\newcommand{\eex}{\end{example}}
\newcommand{\bre}{\begin{example}}
\newcommand{\ere}{\end{example}}
\newcommand{\bma}{\begin{bmatrix}}
\newcommand{\ema}{\end{bmatrix}}
\newcommand{\one}{{{\bf 1}}}
\newcommand{\hrho}{\hat\rho}
\newcommand{\dd}{\; \mathrm{d}}
\newcommand{\ddt}{\frac{\mathrm{d}}{\mathrm{d}t}}
\newcommand{\cP}{\mathscr{P}}
\renewcommand{\aa}{\mathbf{a}}
\newcommand{\bb}{\mathbf{b}}
\newcommand{\cc}{\mathbf{c}}
\newcommand{\ei}{\mathbf{e}_i}
\newcommand{\LN}{\mathbf{T}_N^d}
\newcommand{\cX}{\mathcal{X}}
\newcommand{\cEN}{\mathcal{E}_N}
\newcommand{\cW}{\mathcal{W}}
\newcommand{\cWN}{\mathcal{W}_N}
\newcommand{\tWN}{\widetilde{\mathcal{W}}_N}
\newcommand{\PL}{\mathscr{P}({\LN})}
\newcommand{\PX}{\mathscr{P}(\mathcal{X})}
\newcommand{\dist}{\mathsf{d}_N}
\newcommand{\cA}{\mathcal{A}}
\newcommand{\I}{\mathbf{T}^d}
\newcommand{\QA}{Q_{\aa}^N}
\newcommand{\RAp}{R_{\aa,i+}^N}
\newcommand{\RAm}{R_{\aa,i-}^N}
\newcommand{\RApm}{R_{\aa,i\pm}^N}
\newcommand{\RN}{\mathscr{R}^N}
\newcommand{\PN}{\mathcal{P}_N}
\newcommand{\QN}{\mathcal{Q}_N}
\newcommand{\tx}{\tilde{x}}
\newcommand{\ty}{\tilde{y}}
\newcommand{\rhon}{\rho^N}
\newcommand{\Vn}{V^N}
\newcommand\prob{\mathscr P}
\newcommand\D{\mathscr P}
\newcommand{\sfh}{{\sf H}}
\newcommand{\sfhh}{{\sf h}}
\begin{document}

\title[Convergence of discrete transportation metrics]
{Gromov-Hausdorff convergence of discrete transportation metrics}

\author{Nicola Gigli}
\address{
Universit\'e de Nice-Sophia Antipolis\\ 
Labo. J.-A. Dieudonn\'e\\ 
UMR 6621, Parc Valrose\\
06108 Nice Cedex 02\\
France}
\email{gigli@unice.fr}

\author{Jan Maas}
\address{
Institute for Applied Mathematics\\
University of Bonn\\
Endenicher Allee 60\\
53115 Bonn\\
Germany}
\email{maas@uni-bonn.de}

\thanks{JM acknowledges support by Rubicon subsidy 680-50-0901 of the Netherlands Organisation for Scientific Research (NWO)}

\begin{abstract}
This paper continues the investigation of `Wasserstein-like' transportation distances for probability measures on discrete sets. We prove that the discrete transportation metrics $\cW_N$ on the $d$-dimensional discrete torus $\LN$ with mesh size $\frac1N$ converge, when $N\to\infty$, to the standard 2-Wasserstein distance $W_2$ on the continuous torus in the sense of Gromov--Hausdorff. 
This is the first convergence result for the recently developed  discrete transportation metrics $\cW$. The result shows the compatibility between these metrics and the well-established $2$-Wasserstein metric.
\end{abstract}

\date\today

\maketitle

\tableofcontents

\section{Introduction}

In recent years, the theory of optimal transportation has drawn a lot of attention in the mathematical community, see for instance the monograph \cite{Vil09} and references therein. A crucial role in this context is played by the quadratic transportation distance $W_2$, known as Wasserstein distance: it is a distance between probability measures on a metric space particularly well-suited to study measure dynamics, with important applications in the fields of functional and geometric inequalities, parabolic PDEs and other areas (see \cite{Vil09}).

It turns out that when $\cX$ is a discrete space, there are no non-constant Lipschitz curves in the $2$-Wasserstein space $(\PX,W_2)$, hence $W_2$ is not the right metric to deal with when studying problems where the evolution of measures on discrete spaces is involved.

Motivated by this remark, various authors \cite{CHLZ11,Ma11,Mie11a} proposed a definition of a variant of the distance $W_2$, denoted by $\cW$, on the set of probability measures over a finite set $\cX$ endowed with a Markov kernel $K$. The Markov kernel encodes the geometric information of the space, and the distance $\cW$ is defined via an appropriate variant of the Benamou-Brenier formula. It turns out that the non-existence of Lipschitz curves, and in particular geodesics, is circumvented with the use of  $\cW$. Moreover, this distance has several of the properties that $W_2$ has in the continuous setting, e.g.,  it can be used to study evolution problems \cite{CHLZ11,Ma11,Mie11a} and to give a definition of lower Ricci curvature bounds \cite{EM11,Mie11b}.

Although the definition of $\mathcal W$ formally resembles that of $W_2$  given by the Benamou-Brenier formulation of the optimal transport problem \cite{BB00}, up to now there was no explicit link between the two metrics. The purpose of this paper is to bridge this gap by proving a Gromov-Hausdorff convergence result in an important special case, which we believe may serve as guideline to prove similar results in geometrically more complicated  situations.

Specifically,  we consider the space $\cP(\I)$ of probability measures on the torus $\I:=\R^d/\Z^d$, endowed with the usual $2$-Wasserstein metric $W_2$.
We also consider the $d$-dimensional periodic lattice $\LN:=(\Z / N\Z)^d$ with mesh size $\frac1N$, and endow the space of probability measures $\prob(\LN)$ with its renormalised discrete transportation metric $\cWN$ as defined in \cite{CHLZ11,Ma11,Mie11a} (see Section \ref{sec:prel} below). Our main result reads then as follows:

\begin{theorem}\label{thm:main}
Let $d \geq 1$. Then the metric spaces $(\prob(\LN),\mathcal W_N)$ converge to $(\prob(\I),W_2)$ in the sense of Gromov-Hausdorff as $N \to \infty$.
\end{theorem}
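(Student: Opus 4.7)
My plan is to establish Gromov-Hausdorff convergence by exhibiting, for each $N$, an $\eps_N$-isometry $\iota_N:\prob(\LN)\to\prob(\I)$ with $\eps_N\to 0$. Write $Q_i\subset\I$ for the axis-aligned cube of side $1/N$ centred at $i/N$, for $i\in\LN$. I would define the embedding $\iota_N\bar\rho:=\sum_i N^d\bar\rho(i)\one_{Q_i}\,\mathrm{d}x$ together with its companion projection $(Q_N\mu)(i):=\mu(Q_i)$. The coupling that maps each cube to itself gives $W_2(\iota_N Q_N\mu,\mu)\leq\tfrac{\sqrt d}{2N}$, so $\iota_N(\prob(\LN))$ is $O(1/N)$-dense in $(\prob(\I),W_2)$; the substance of the theorem is then to compare the two distances uniformly up to an $o(1)$ error.

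For the upper bound $\cW_N(Q_N\mu_0,Q_N\mu_1)\leq W_2(\mu_0,\mu_1)+\eps_N$, I would start from a Benamou-Brenier minimising pair $(\mu_t,v_t)$ on $\I$ between $\mu_0$ and $\mu_1$, first smoothed so that $\mu_t$ has a density bounded away from zero. Set $\bar\rho^N_t:=Q_N\mu_t$ and, on the oriented edge $(i,i+\ei)$, let $V^N_t(i,i+\ei)$ be the flux of $\mu_tv_t$ across the face shared by $Q_i$ and $Q_{i+\ei}$. Gauss's theorem shows that $(\bar\rho^N_t,V^N_t)$ satisfies the discrete continuity equation. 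Since the density is smooth and strictly positive, the logarithmic mean $\theta(\bar\rho^N_t(i),\bar\rho^N_t(i+\ei))$ entering the definition of $\cW_N$ differs from the cube-averages of $\mu_t$ by $O(1/N)$, and a short Taylor expansion bounds the discrete action by $\int_0^1\!\int_\I|v_t|^2\,\mathrm{d}\mu_t\,\mathrm{d}t+o(1)$. A standard density argument extends the bound to all of $\prob(\I)$.

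Conversely, for $W_2(\iota_N\bar\rho_0,\iota_N\bar\rho_1)\leq\cW_N(\bar\rho_0,\bar\rho_1)+\eps_N$ I would take a minimising discrete pair $(\bar\rho_t,V_t)$ for $\cW_N$ and lift it to a continuous pair $(\mu^N_t,v^N_t)$ on $\I$. The density $\mu^N_t:=\iota_N\bar\rho_t$ is piecewise constant on the cubes $Q_i$, and I would design $v^N_t$ as a piecewise-linear field with flux $V^N_t(i,i+\ei)$ across each face. A direct computation, perhaps after a further mollification at scale $\ll 1/N$ to ensure the continuity equation holds distributionally on $\I$, bounds the Benamou-Brenier action of the lift by the discrete action of $(\bar\rho_t,V_t)$, provided the logarithmic mean $\theta$ is comparable to the arithmetic mean on every active edge.

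The main obstacle lies in this last step, at configurations where $\bar\rho_t$ vanishes or is very small on some sites: there $\theta$ collapses relative to the arithmetic mean, and the piecewise-linear lift acquires a large $L^2(\mathrm{d}\mu^N_t)$-norm, destroying the bound. I would circumvent this by a preliminary regularisation, replacing $\bar\rho_t$ by the mixture $(1-\eta)\bar\rho_t+\eta\hat\rho_N$ with $\hat\rho_N$ the uniform measure on $\LN$ and $\eta$ small, and controlling the extra cost of the mixing directly in $\cW_N$ by a bound of order $\sqrt\eta$. On the mixed trajectory the density is uniformly bounded below, the lifting construction goes through, and one recovers the estimate by letting $\eta\to 0$ after $N\to\infty$. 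A careful bookkeeping of this double limit, together with the upper bound applied to appropriately regularised targets, produces the uniform $\eps_N\to 0$ required for Gromov-Hausdorff convergence.
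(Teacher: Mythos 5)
Your overall strategy mirrors the paper's: you use the same projection $\PN$ (integration over cubes) and embedding $\QN$ (piecewise-constant density, piecewise-linear vector field), you prove the upper bound $\cWN(\PN\mu_0,\PN\mu_1)\leq W_2(\mu_0,\mu_1)+o(1)$ by projecting a smoothed Benamou--Brenier geodesic, and you prove the lower bound by lifting a discrete minimiser and comparing the two actions. The point where your argument has a genuine gap is the lower bound, in the regularisation step needed to compare the two means.

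When you lift a discrete pair $(\bar\rho_t,V_t)$ to $\I$ using the piecewise-constant density and a piecewise-linear vector field, a direct computation (done in Proposition~\ref{prop:lower} of the paper) shows that the continuous action is controlled by the discrete action in which the logarithmic mean $\theta$ is replaced by the \emph{harmonic} mean $\tilde\theta$ (not the arithmetic mean as you suggest). Since $\tilde\theta\leq\theta$, this goes in the wrong direction. To recover the logarithmic-mean action, one needs the relative oscillation of $\bar\rho_t$ between neighbouring sites to be small: the key elementary estimate is
\begin{align*}
\frac{1}{\tilde\theta(a,b)}-\frac{1}{\theta(a,b)}\leq\frac{(b-a)^2}{ab}\,\frac{1}{\tilde\theta(a,b)}\;,
\end{align*}
which is useful only if $(b-a)^2/(ab)<1$ along every active edge. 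Your regularisation $\bar\rho_t\mapsto(1-\eta)\bar\rho_t+\eta$ gives the lower bound $\geq\eta$ on the trajectory but does nothing to control the oscillation: along a $\cWN$-geodesic the intermediate densities can concentrate (a single site may carry mass $\sim N^d$), and after the mixture the ratio $(b-a)^2/(ab)$ across an edge can be as large as $N^{2d}/\eta^2$. So the mean comparison fails, and the bound does not close by sending $\eta\to0$ after $N\to\infty$. The paper handles this by a second regularisation step: it applies the discrete heat semigroup $\sfh^N_b$ to the trajectory, which (Proposition~\ref{prop:heatdisc}) forces a uniform Lipschitz bound $C(b)$ independent of the input, while decreasing the action by convexity. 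Combined with the mixing step and a Poincar\'e estimate for the cost of modifying the endpoints, this places the regularised curve in $\D_{\bar\delta}(\LN)$, where the harmonic/logarithmic comparison is effective (Propositions~\ref{prop:wtildewdelta} and~\ref{prop:hard}). To complete your argument you would need to add this oscillation control, or some substitute for it, after the convex mixing.
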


In order not to make this introduction too long, we refer to the body of the paper for the precise definitions of the distances involved, see in particular Section \ref{se:discmetr}. The outline of the strategy of the proof is in  Section \ref{subsec:ingred},  the crucial estimates needed in our argument are contained in Section \ref{subsec:estim}, and then the proof is completed in Section \ref{subsec:wrapup}.

\medskip
For the sake of comparison, let us mention that if $(\cX_N,d_N)$ is a sequence of compact metric spaces converging in the GH-sense to a limit space, then the corresponding $2$-Wasserstein spaces also converge in GH-sense, as is easy to prove (see, e.g., Theorem 28.6 in \cite{Vil09}). The crucial point in Theorem \ref{thm:main} is that the discrete transportation metric $\cW$ is used instead of the $2$-Wasserstein metric. This makes the result non-trivial, and it allows for potential applications to convergence of gradient flows \cite{CG11,Ser11}, since GH-convergence results have proven to be powerful in this context \cite{Gi10}.
 
Different results linking discretisations of the Wasserstein distance, evolution equations and passage to the limit can be found in, e.g., \cite{GT06,PR12}.
Convergence results for lower Ricci curvature bounds on discrete spaces have been obtained in \cite{BS09}. Note however, that the notion of discrete Ricci curvature in that paper is based on the usual $2$-Wasserstein metric. A different notion of Ricci curvature has been studied in \cite{EM11}. The latter notion relies on the metric $\cW$, which is the main object in the present paper.

\subsection*{Acknowledgement}

{\small
This work has been started during a visit of the second named author to the University of Nice. He thanks this institution for its kind hospitality and support. The authors thank the anonymous referees for their careful reading and useful suggestions.
}

\section{Preliminaries}\label{sec:prel}

\subsection{The \texorpdfstring{$2$}{2}-Wasserstein metric} Let $\cM$ be a compact smooth Riemannian manifold and $\prob (M)$ the set of Borel probability measures on it. The Wasserstein distance $W_2$ on $\prob (M)$ is usually defined by minimizing the transport cost with respect to the cost function distance-squared. It has been emphasized by Benamou and Brenier \cite{BB00} that a completely different introduction to the subject can be given in terms of solutions to the continuity equation. The following result has been proved for $M=\R^d$ in \cite{AGS08} (see also \cite{O01}), the case of general manifolds being a consequence of Nash's embedding theorem (see also \cite[Proposition 2.5]{Erb10} for a direct proof on manifolds).
\begin{propdef}\label{prop:BeBr}
Let $\cM$ be a compact smooth Riemannian manifold and $\mu,\nu\in\prob (M)$. Then we have
\begin{equation}
\label{eq:BeBr}
W_2^2(\mu,\nu)=\min \int_0^1\int_M |v_t|^2(x)\dd \mu_t(x)\dd t\;,
\end{equation}
the minimum being taken among all distributional solutions $(\mu_t,v_t)$ of the continuity equation
\begin{equation}
\label{eq:conteq}
\ddt\mu_t+\nabla\cdot(v_t\mu_t)=0\;,
\end{equation}
such that $t\mapsto\mu_t$ is weakly continuous in duality with $C(M)$ and $\mu_0=\mu$, $\mu_1=\nu$.
\end{propdef}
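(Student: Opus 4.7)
The statement is a classical Benamou--Brenier-type identity on a compact Riemannian manifold, so my plan would follow the now-standard two-inequality scheme: show that the Wasserstein distance dominates the dynamical infimum, and conversely that any admissible $(\mu_t,v_t)$ produces an absolutely continuous curve in the Wasserstein space whose length controls $W_2(\mu,\nu)$.

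For the inequality $W_2^2(\mu,\nu)\geq \inf\int_0^1\!\int |v_t|^2\,\mathrm{d}\mu_t\,\mathrm{d}t$, I would use displacement interpolation. Given an optimal coupling $\pi\in\cP(M\times M)$ for $W_2(\mu,\nu)$, disintegrate it along minimizing geodesics: for $\pi$-a.e.\ pair $(x,y)$ choose a constant-speed minimizing geodesic $\gamma^{x,y}$ (using a measurable selection, as in Villani), and define $\mu_t$ as the push-forward of $\pi$ under $(x,y)\mapsto \gamma^{x,y}_t$. One then constructs a time-dependent velocity field $v_t$ tangent to the geodesics so that $(\mu_t,v_t)$ solves the continuity equation; the classical computation gives $\int|v_t|^2\,\mathrm{d}\mu_t=W_2^2(\mu,\nu)$ for a.e.\ $t$, from which the upper bound follows.

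For the reverse inequality, I would show that any distributional solution $(\mu_t,v_t)$ of \eqref{eq:conteq} with $\int_0^1\int|v_t|^2\,\mathrm{d}\mu_t\,\mathrm{d}t<\infty$ yields a curve $t\mapsto \mu_t$ which is absolutely continuous in $(\prob(M),W_2)$ with metric derivative $|\dot\mu_t|\leq \bigl(\int|v_t|^2\,\mathrm{d}\mu_t\bigr)^{1/2}$. The standard argument tests \eqref{eq:conteq} against $\phi\in C^\infty(M)$, uses Kantorovich duality together with the fact that $|\nabla\phi|\leq 1$ makes $\phi$ $1$-Lipschitz on $M$ (here compactness and smoothness of $M$ enter), and applies Cauchy--Schwarz; Jensen then gives
\[
W_2^2(\mu,\nu)\leq \Bigl(\int_0^1 |\dot\mu_t|\,\mathrm{d}t\Bigr)^{2}\leq \int_0^1\!\int_M |v_t|^2\,\mathrm{d}\mu_t\,\mathrm{d}t.
\]
Combining the two inequalities yields the identity, and the minimum is attained on the displacement interpolant constructed in the first step.

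The main technical obstacle is the manifold setting: on $\R^d$, test functions of the form $\phi(x)=\langle p,x\rangle$ and translation-based mollification of $(\mu_t,v_t)$ are directly available, but on $M$ one cannot globally subtract vectors. I would handle this either via Nash's embedding (embed $M$ isometrically in some $\R^k$, reduce \eqref{eq:conteq} to a continuity equation in $\R^k$ with a velocity field tangent to $M$, and apply the Euclidean result, then restrict to $1$-Lipschitz test functions on $M$), or, following the direct approach of Erbar, by mollifying $\mu_t$ via the heat semigroup on $M$ to get a smooth approximation for which computations with Kantorovich potentials are justified. Both routes require standard but nontrivial approximation arguments to pass from smooth $(\mu_t,v_t)$ back to the general distributional solution, and this is where I would expect to spend most of the effort.
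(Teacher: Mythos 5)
The paper does not prove this statement; it cites the Euclidean case to Ambrosio--Gigli--Savar\'e and notes that the manifold case follows either via Nash's embedding theorem or via Erbar's direct argument, which are exactly the two routes you outline at the end. Your two-inequality sketch (displacement interpolation for one direction, metric-derivative/Kantorovich-duality estimate for the other) is the standard proof underlying those references, so your proposal is consistent with the approach the paper relies on.
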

In the sequel, when considering the continuous setting we will work with $M$ being the $d$-dimensional torus $\I:=\R^d/\Z^d$ and we will consider solutions to the continuity equation in terms of probability densities and momentum vector fields. To fix the ideas, we give the following definition.
\begin{definition}[Solutions to the continuity equation in the continuous torus]\label{def:solcont}
Consider the mappings $[0,1]\times\I\ni(t,x)\mapsto \rho_t(x)\in\R$ and $[0,1]\times \I\mapsto V_t(x)\in\R^d$. We say that $(\rho_t,V_t)$ solves the continuity equation
\begin{equation}
\label{eq:contcont}
\ddt\rho_t+\nabla \cdot V_t=0\;,
\end{equation}
provided both $(t,x)\mapsto \rho_t(x)$ and $(t,x)\mapsto V_t(x)$ are in $L^1([0,1]\times \I)$, $t\mapsto\rho_t$ is continuous with respect to convergence in duality with $C(\I)$, and \eqref{eq:contcont} is satisfied in the sense of distributions.
\end{definition}

\subsection{Discrete transportation metrics}

In several recent works \cite{CHLZ11,Ma11,Mie11a} discrete analogues of $W_2$ have been considered, which are well suited to study evolution equations in a discrete setting. The definition of the Wasserstein distance requires a metric on the underlying space. In \cite{Ma11}, instead, the starting point is a  Markov kernel $K$ on the finite set $\cX$, i.e., we assume that $K : \cX \times \cX \to \R_+$ satisfies 
$
 \sum_{y \in \cX} K(x,y) = 1
$
for all $x \in \cX$.
We assume that $K$ is irreducible and denote the unique steady state by $\pi$. Thus $\pi$ is  the unique probability measure on $\cX$ satisfying 
\begin{align*}
 \pi(y) = \sum_{x \in \cX} \pi(x) K(x,y)
\end{align*}
for all $y \in \cX$.
We shall assume that $K$ is reversible, i.e., the detailed balance equations
\begin{align*}
 K(x,y) \pi(x) = K(y,x) \pi(y)
\end{align*}
hold for all $x,y \in \cX$. Since basic Markov chain theory implies that $\pi$ is strictly positive, we can -- and will -- identify probability measures on $\cX$ with their densities with respect to $\pi$, i.e., we set
\[
\PX := \Big\{ \, \rho : \cX \to \R_+ \ | \ \sum_{x \in \cX} \pi(x) \rho(x)  = 1 \, \Big\}\;.
\]
In order to define the metric $\cW$ on $\PX$, it is necessary to fix a function $\theta : \R_+ \times \R_+ \to \R_+$. Various choices have been considered in \cite{EM11,Ma11}, but here we will focus on the case where $\theta$ is the logarithmic  mean, which is defined by
\begin{align*}
 \theta(s,t) = \int_0^1 s^{1-p} t^p \dd p\;.
\end{align*}
With this choice of $\theta$, it has been shown in \cite{CHLZ11,Ma11,Mie11a} that the discrete heat flow is the gradient flow of the Boltzmann-Shannon entropy with respect to $\cW$.
For $\rho \in \PX$ and $x, y \in \cX$ we set
\begin{align*}
 \hrho(x,y) = \theta(\rho(x), \rho(y))\;,
\end{align*}
which can be regarded informally as being ``the density $\rho$ at the edge $(x,y)$''. 
According to \cite[Lemma 2.9]{EM11}, the following definition can be taken as one of the equivalent definitions of the transportation metric $\cW$ on $\PX$ associated to the logarithmic mean.
\begin{definition}\label{def:W}
Let $K$ be an irreducible and reversible Markov kernel on a finite set $\cX$, and let $\bar\rho_0,\bar\rho_1\in\PX$. The distance $\cW(\bar\rho_0,\bar\rho_1)$ is defined by
\begin{align}\label{eq:WV}
\cW(\bar\rho_0, \bar\rho_1)^2 =   \inf\bigg\{ \frac12 \int_0^1\sum_{x,y \in \cX}\frac{V_t(x,y)^2}{\hrho_t(x,y)} K(x,y) \pi(x) \dd t  \bigg\}\;,
\end{align}
where the infimum runs over all curves $[0,1]\ni t\mapsto (\rho_t,V_t)$ such that:
\begin{enumerate}[(i)]
\item $\rho_t\in \PX$ for any $t\in[0,1]$, the function $t\mapsto \rho_t(x)$ is continuous for any $x\in\cX$, and  $\rho_0=\bar\rho_0$, $\rho_1=\bar\rho_1$;
\item $V_t:\cX \times \cX \to\R$ for any $t\in[0,1]$, and the function $t\mapsto V_t(x,y)$ belongs to  $L^1(0,1)$ for any $x,y\in\cX$;
\item the ``discrete continuity equation''
\begin{equation}
\label{eq:discce}
\displaystyle{\ddt\rho_t(x) + \frac12 \sum_{y \in \cX}\big(V_t(x,y) - V_t(y,x)\big) K(x,y) = 0}
\end{equation}
holds for all $x \in \cX$  in the sense of distributions.
\end{enumerate}
\end{definition}
\subsection{The transportation metric on the discrete torus}\label{se:discmetr}

In this paper we shall only be concerned with simple random walk on the $d$-dimensional discrete torus $\LN := (\Z / N\Z)^d  = \{0, \ldots, N-1\}^d$, in which case the kernel $K_N : \LN \times \LN \to [0,1]$ is given by 
\begin{align*}
 K_N(\aa, \bb) =  \left\{ \begin{array}{ll}
 \frac1{2d},
  & \text{$\bb = \aa\pm \ei \mod N$ for some $i \in \{1, \ldots,  d\}$}\;,\\
 0,
  & \text{otherwise}\;.\end{array} \right.
\end{align*}
Here, $\ei$ denotes the $i$-th unit vector. All computations in $\LN$ will be performed modulo $N$ without further mentioning.
 
In this case the stationary probability measure $\pi_N$ is the uniform measure given by $\pi_N(\aa) = N^{-d}$ for all $\aa \in \LN$. Therefore, the collection of probability densities with respect to $\pi_N$ is given by
\begin{align*}
  \D(\LN) = 
  \Big\{ \, \rho_N : \LN \to \R_+ \ \Big| \ 
    \sum_{\aa \in \LN} \rho_N(\aa)  = N^d \, \Big\}\;.
\end{align*}
For functions $f, g : \LN \to \R$ we consider the normalized $L^2$-inner product 
\begin{align*}
 \ip{f,g}_{L^2_N}  & = \frac{1}{N^d}\sum_{\aa \in \LN} f(\aa)g(\aa)
 \end{align*}
and the Dirichlet form
\begin{align*}
 \cEN(f,g) & = \frac{1}{N^{d-2}} \sum_{\aa \in \LN} \sum_{i = i}^d 
 \big( f(\aa + \ei) - f(\aa) \big)\big( g(\aa + \ei) - g(\aa) \big)\;.
\end{align*}
Furthermore we set
\begin{align*}
 \| f \|_{L^2_N}  & = \sqrt{ \ip{f,f}_{L^2_N}}\;,\qquad
 \cEN(f)  = \cEN(f,f)\;.
\end{align*}
Let $\Delta_N$ be the discrete Laplacian, defined by
\begin{align*}
 \Delta_N f(\aa) &=2 d N^2 (K_N - I) f(\aa)= N^2 \sum_{i=1}^d \Big( f(\aa + \ei) - 2f(\aa)  +  f(\aa - \ei)\Big)
\end{align*}
for $\aa \in \LN$. Notice that following integration by parts formula holds:
\begin{align}\label{eq:ibp}
 \cEN(f,g) = - \ip{\Delta_N f, g}_{L^2_N}\;.
\end{align}
Moreover, given $g:\LN\to\R$, the equation $\Delta_N f=g$ can be solved if and only if $\sum_{\aa\in\LN}g(\aa)=0$, in which case the solution is unique. 
We shall use the well-known Poincar\'e inequality on $\LN$, which we now recall.
\begin{proposition}[Poincar\'e inequality on $\LN$]\label{prop:Poincare}
Let $d \geq 1$ and $N \geq 4$. For all $f : \LN \to \R$ with $\sum_{\aa \in \LN} f(\aa) = 0$ we have
\[
\begin{split}
\| f \|_{L^2_N}^2 &\leq \frac{1}{2N^2(1- \cos(2\pi/N))} \cEN(f)\;,\\
\cEN(\Delta_N^{-1}f)&\leq \frac{1}{2N^2(1- \cos(2\pi/N))}\| f \|_{L^2_N}^2\;.
\end{split}
\]
\end{proposition}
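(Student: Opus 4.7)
The plan is to use discrete Fourier analysis on $\LN$, since $\Delta_N$ is a translation-invariant convolution operator whose eigenfunctions are the characters of the torus, and both estimates in the proposition are spectral-gap statements for this operator.

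First I would introduce the characters $\chi_{\mathbf{k}}(\aa) = \exp(2\pi i \mathbf{k}\cdot\aa / N)$ for $\mathbf{k} \in \LN$, and check that they form an orthonormal basis of $L^2_N$ with respect to the normalized inner product. A direct computation on a single character using the definition of $\Delta_N$ gives
\[
\Delta_N \chi_{\mathbf{k}} = -\lambda_{\mathbf{k}} \chi_{\mathbf{k}},
\qquad \lambda_{\mathbf{k}} = 2 N^2 \sum_{i=1}^d \bigl(1 - \cos(2\pi k_i/N)\bigr).
\]
The only vanishing eigenvalue is $\lambda_0 = 0$, and the smallest strictly positive one is realised at $\mathbf{k} = \pm\ei$, giving $\lambda_{\min} = 2N^2(1 - \cos(2\pi/N))$. (The hypothesis $N \geq 4$ ensures $2\pi/N \leq \pi/2$, so this quantity is strictly positive and all the expressions make sense.)

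Now for any $f : \LN \to \R$ with $\sum_{\aa}f(\aa) = 0$, the Fourier coefficient $\hat f(0)$ vanishes, so expanding $f = \sum_{\mathbf{k}\neq 0}\hat f(\mathbf{k})\chi_{\mathbf{k}}$ and using Parseval together with the integration-by-parts formula \eqref{eq:ibp} yields
\[
\|f\|_{L^2_N}^2 = \sum_{\mathbf{k}\neq 0}|\hat f(\mathbf{k})|^2,
\qquad \cEN(f) = \langle -\Delta_N f, f\rangle_{L^2_N} = \sum_{\mathbf{k}\neq 0}\lambda_{\mathbf{k}}\,|\hat f(\mathbf{k})|^2.
\]
Bounding $\lambda_{\mathbf{k}} \geq \lambda_{\min}$ term by term gives the first inequality.

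For the second inequality, let $g = \Delta_N^{-1} f$, which is well-defined (up to an additive constant, fixed by requiring zero mean) precisely because $f$ has zero mean. In Fourier variables $\hat g(\mathbf{k}) = -\hat f(\mathbf{k})/\lambda_{\mathbf{k}}$ for $\mathbf{k}\neq 0$, so
\[
\cEN(g) = \sum_{\mathbf{k}\neq 0}\lambda_{\mathbf{k}}|\hat g(\mathbf{k})|^2 = \sum_{\mathbf{k}\neq 0}\frac{|\hat f(\mathbf{k})|^2}{\lambda_{\mathbf{k}}} \leq \frac{1}{\lambda_{\min}}\|f\|_{L^2_N}^2,
\]
which is the desired estimate. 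No real obstacle is expected: the whole argument is routine spectral analysis once the eigenvalues of $\Delta_N$ are identified, and the only mild care needed is the verification that $\lambda_{\min}$ is indeed attained at $\mathbf{k} = \pm \ei$, which follows from monotonicity of $1-\cos$ on $[0,\pi]$ together with $N\geq 4$.
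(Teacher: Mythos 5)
Your proof is correct and rests on the same spectral idea as the paper's, namely diagonalising $\Delta_N$ via the discrete Fourier characters and reading off the spectral gap. The differences are presentational: the paper cites the $d=1$ spectrum of $I-K_N$ and invokes tensorization for $d>1$, whereas you compute the eigenvalues $\lambda_{\mathbf k}=2N^2\sum_i(1-\cos(2\pi k_i/N))$ directly in all dimensions; and for the second inequality the paper deduces it from the first together with the integration-by-parts identity \eqref{eq:ibp} (one line of Cauchy--Schwarz), while you apply Parseval a second time. Both routes are equally valid; yours is more self-contained. One small quibble: your remark that $N\geq 4$ is needed ``so that $2\pi/N\leq \pi/2$'' is not the right justification --- $1-\cos(2\pi/N)>0$ already for $N\geq 2$, and the identification of $\lambda_{\min}$ at $\mathbf{k}=\pm\ei$ uses only that $1-\cos$ increases on $[0,\pi]$ (and is symmetric about $\pi$), which again holds for all $N\geq 2$. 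The hypothesis $N\geq 4$ in the statement is thus not actually needed here; it is a harmless convention.
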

\begin{proof}
One way to prove the first inequality is as follows. If $d = 1$, then the spectrum of the operator $I - K_N$ on $L^2(\LN,\pi_N)$ consists of the eigenvalues
\begin{align*}
{1- \cos(2\pi n/N)}\;, \qquad 0 \leq n \leq N-1\;,
\end{align*}
(see, e.g., \cite[Section 4.2]{DSC96}),
which yields the result if $d = 1$.
The result in dimension $d > 1$ follows by tensorization (see, e.g., \cite[Lemma 3.2]{DSC96}). 

The second inequality follows from the first one, using the integration by parts formula \eqref{eq:ibp}.
\end{proof}
\begin{remark}
In the limit $N \to \infty$, one recovers the classical Poincar\'e inequality on the torus $\I$:
\begin{align*}
    \| f \|_{L^2(\I)}^2 \leq \frac{1}{2\pi^2}\|\nabla f\|_{L^2(\I)}^2\;,
\end{align*}
valid for any $f$ with zero mean.\end{remark}

It will be useful to introduce some more notation.
For $\aa = (a_1, \ldots, a_d)\in \LN$ we define the cube $\QA$ by
\begin{align*}
 \QA := \Big[\frac{a_1}{N}, \frac{a_1 + 1}{N}\Big) \times \cdots \times
 \Big[\frac{a_d}{N}, \frac{a_d + 1}{N}\Big) 
 \subseteq \I\;,
\end{align*}
so that the torus $\I = \R^d / \Z^d$ can be written as the disjoint union
\begin{align*}
  \I =  \bigcup_{\aa \in \LN} \QA\;.
\end{align*}
For $i = 1, \ldots, d$, the facets of $\QA$ will be denoted by
\begin{align*}
 \RAm &=  \Big[\frac{a_1}{N}, \frac{a_1 + 1}{N}\Big] \times \,  \ \cdots \ 
 \Big \{ \frac{a_i}{N} \Big\}   \  \cdots \ \,  \times
 \Big[\frac{a_d}{N}, \frac{a_d + 1}{N}\Big]\;, \\
 \RAp &=  \Big[\frac{a_1}{N}, \frac{a_1 + 1}{N}\Big] \times \cdots
  \Big\{ \frac{a_i + 1}{N}\Big \}  \cdots \times
 \Big[\frac{a_d}{N}, \frac{a_d + 1}{N}\Big]\;,
\end{align*}
see Figure 1.

\begin{figure}\label{fig:1}
\scalebox{.37}
{\includegraphics{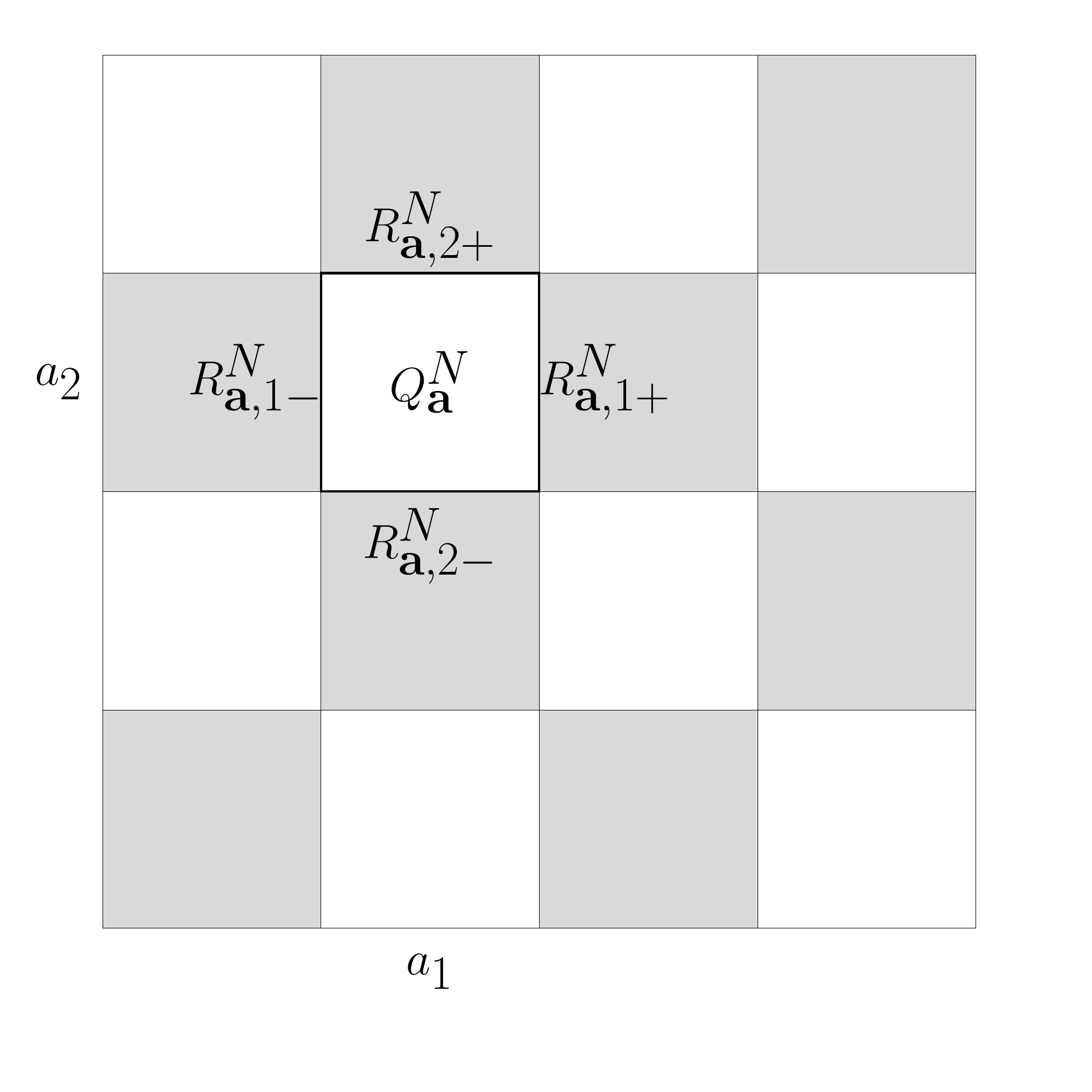}}
\caption{The cube $Q_\aa$ is drawn with its facets in $\mathbf{T}_N^d$ with $d = 2$, $N = 4$, and $\aa = (2,3)$.} 
\end{figure}

The collection of all these facets $\RApm$ will be denoted by $\RN$. 
For $\rho \in \PL$ and $R = \RApm \in \RN$ we shall write 
\begin{align*}
 \hrho_N(R) := \theta( \rho_N(\aa), \rho_N(\aa \pm \ei) )\;.
\end{align*}
Notice that $K_N(\aa,\bb)$ is non-zero only for $\aa,\bb$ such that $\aa-\bb=\pm\ei$ for some $i=1,\ldots, d$. Moreover, if $V$ satisfies the discrete continuity equation \eqref{eq:discce}, then the same holds for its anti-symmetrisation $V^{\rm asym}$ defined by $V_t^{\rm asym}(\aa, \bb) = \frac12(V_t(\aa, \bb) - V_t(\bb, \aa))$, and we have
\begin{align*}
 \sum_{x,y \in \cX}\frac{V_t^{\rm asym}(x,y)^2}{\hrho_t(x,y)} K(x,y) \pi(x)
 \leq
 \sum_{x,y \in \cX}\frac{V_t(x,y)^2}{\hrho_t(x,y)} K(x,y) \pi(x)\;.
\end{align*}
Therefore, in Definition \ref{def:W}(ii) it suffices to consider vector fields $V :  \LN \times \LN \to \R$ which are anti-symmetric, i.e.,  $V(\aa, \bb) = -V(\bb,\aa)$. This will be our convention from now on. Moreover, we shall identify an antisymmetric vector field $V$ with a function $V : \RN \to \R$ defined by
\begin{align*}
 V(\RAp) := V(\aa, \aa+\ei)\;.
\end{align*}

Let $\cW_{K_N}$ denote the metric on $\PL$ associated with the kernel $K_N$ according to Definition \ref{def:W}. It will be convenient to work with the normalised metric
\begin{align*}
  \cWN := \frac{\cW_{K_N}}{N\sqrt{2d}}\;,
\end{align*}
which is a quantity of order $1$. 

Given a probability density $\rho_N\in \D(\LN)$ and a `momentum vector field' $V_N:\RN\to\R$, the \emph{action} $\cA_N$ of $(\rho_N,V_N)$ is defined by
\begin{equation}
\label{eq:defaction}
\cA_N(\rho_N,V_N):= \frac{1}{4d^2 N^{d+2}}\sum_{R\in\RN}\frac{V_N(R)^2}{\hrho_N(R)}\;.
\end{equation}
With this notation and taking Definition \ref{def:W} into account, it is immediate to obtain the following expression for the metric $\cWN$.

\begin{lemma}\label{lem:WN-formula}
For any $\bar\rho_{N,0}, \bar\rho_{N,1} \in \D(\LN)$ we have
\begin{equation}
\label{eq:defwn}
\cWN(\bar\rho_{N,0}, \bar\rho_{N,1})^2= \inf \bigg\{ \int_0^1 \cA_N(\rho_{N,t},V_{N,t})\dd t\bigg\}\;,
\end{equation}
where the infimum runs over all  curves $[0,1]\ni t\mapsto (\rho_{N,t},V_{N,t})$ such that:
\begin{enumerate}[(i)]
\item $\rho_{N,t}\in \D(\LN)$ for any $t\in[0,1]$, and the function $t\mapsto \rho_{N,t}(\aa)$ is continuous for any $\aa\in\LN$ with $\rho_{N,0}=\bar\rho_{N,0}$, $\rho_{N,1}=\bar\rho_{N,1}$;
\item $V_{N,t}:\RN\to\R$ for any $t\in[0,1]$, and the function $t\mapsto V_{N,t}(R)$ belongs to  $L^1(0,1)$ for any $R\in\RN$;
\item the discrete continuity equation
\begin{equation}
\label{eq:contn}
 \displaystyle\ddt \rho_{N,t}(\aa)+ \frac{1}{2d}\sum_{i=1}^d\Big( V_{N,t}(\RAp)-  V_{N,t}(\RAm)\Big)= 0
 \end{equation}
holds for all $\aa \in \LN$ in the sense of distributions.
\end{enumerate}
\end{lemma}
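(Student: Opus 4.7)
The strategy is to unpack the definition of $\cW_{K_N}$ from Definition \ref{def:W}, substitute in the explicit form of the kernel $K_N$ and the stationary measure $\pi_N$, apply the anti-symmetric reduction spelled out in the text just before the lemma, and keep careful track of the combinatorial factors. The continuity equation and the normalisation by $N\sqrt{2d}$ are then matched by a direct rewrite.

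For the action, I start from the definition
\[
 \cW_{K_N}(\bar\rho_{N,0},\bar\rho_{N,1})^2 = \inf \frac{1}{2} \int_0^1 \sum_{\aa,\bb\in\LN}\frac{V_t(\aa,\bb)^2}{\hrho_t(\aa,\bb)} K_N(\aa,\bb)\pi_N(\aa)\dd t
\]
and use that the summand vanishes unless $\bb = \aa \pm \ei$, in which case $K_N(\aa,\bb)\pi_N(\aa) = \frac{1}{2d N^d}$. Restricting (by the anti-symmetric reduction recalled in the text) to anti-symmetric vector fields and adopting the convention $V(\RAp) := V(\aa,\aa+\ei)$, each unoriented edge of $\LN$ corresponds to exactly one facet $R \in \RN$, and each such $R$ appears \emph{twice} in the sum over ordered pairs $(\aa,\bb)$, once as $(\aa,\aa+\ei)$ and once as $(\aa+\ei,\aa)$, contributing the same value $V(R)^2/\hrho_t(R)$ in both cases (since $\hrho_t$ and $V^2$ are symmetric). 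Hence
\[
 \frac12 \sum_{\aa,\bb} \frac{V_t(\aa,\bb)^2}{\hrho_t(\aa,\bb)} K_N(\aa,\bb)\pi_N(\aa) = \frac{1}{2d N^d}\sum_{R\in\RN}\frac{V_t(R)^2}{\hrho_t(R)}.
\]
Dividing by $2 d N^2$ to pass from $\cW_{K_N}^2$ to $\cWN^2$ produces exactly the prefactor $\frac{1}{4d^2 N^{d+2}}$ appearing in \eqref{eq:defaction}, yielding the claimed action integral.

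For the continuity equation, with anti-symmetric $V$ we have $V_t(\aa,\bb)-V_t(\bb,\aa) = 2V_t(\aa,\bb)$, so \eqref{eq:discce} becomes
\[
 \ddt\rho_t(\aa) + \frac{1}{2d}\sum_{i=1}^d \Big(V_t(\aa,\aa+\ei) + V_t(\aa,\aa-\ei)\Big) = 0.
\]
By the naming convention for facets, $\RAm$ is the face of $Q_\aa$ at $a_i/N$, which coincides with the $(i+)$-face of $Q_{\aa-\ei}$; therefore $V_t(\aa,\aa-\ei) = -V_t(\aa-\ei,\aa) = -V_t(\RAm)$, while $V_t(\aa,\aa+\ei) = V_t(\RAp)$. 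Substituting gives \eqref{eq:contn}. Since the identification between anti-symmetric vector fields on $\LN\times\LN$ supported on nearest neighbours and functions on $\RN$ is a bijection that preserves both the action integrand and the continuity equation, the infima in \eqref{eq:WV} (reduced to anti-symmetric vector fields, as justified in the text) and \eqref{eq:defwn} are equal, completing the proof.

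There is no real obstacle here: the only thing that requires care is the bookkeeping that (a) each facet $R$ is counted twice in the ordered-pair sum, producing the factor that cancels the $\tfrac12$ in \eqref{eq:WV} but leaves one overall $\tfrac{1}{2d N^d}$, and (b) the labelling convention $\RAm = R_{\aa-\ei,i+}$ introduces the sign needed to convert $V_t(\aa,\aa-\ei)$ into $-V_t(\RAm)$ in the continuity equation.
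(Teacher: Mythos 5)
Your proof is correct and carries out precisely the bookkeeping that the paper declares ``immediate'': unpacking Definition~\ref{def:W} with $K_N(\aa,\bb)\pi_N(\aa)=\frac{1}{2dN^d}$ for nearest neighbours, using the anti-symmetric reduction and the double-counting of each facet in the ordered sum to arrive at the prefactor $\frac{1}{4d^2N^{d+2}}$ after dividing by $2dN^2$, and noting $\RAm = R^N_{\aa-\ei,i+}$ so that $V_t(\aa,\aa-\ei)=-V_t(\RAm)$ in the continuity equation. This is the same (and only) route, just written out.
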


By analogy with Definition \ref{def:solcont} we formulate the following discrete counterpart.
\begin{definition}[Solutions to the continuity equation in the discrete torus]
Let $[0,1]\times\LN\ni (t,\aa)\mapsto\rho_{N,t}(\aa)\in\R$ and $[0,1]\times\RN\ni (t,R)\mapsto V_{N,t}(R)\in\R^d$. We say that $(\rho_{N,t},V_{N,t})$ is a solution to the discrete continuity equation \eqref{eq:contn} provided that $(i)$, $(ii)$ and $(iii)$ in Lemma \ref{lem:WN-formula} are fulfilled.
\end{definition}
Finally, we recall a couple of properties of $\cWN$ that will be used in the sequel. We shall use the metric $\dist$ on $\LN$ defined by
\begin{align*}
\dist(\aa,\bb) = \frac1N\sqrt{\sum_{i=1}^d |a_i-b_i|^2}
\end{align*}
for $\aa, \bb \in \LN$. Recall that the computations are understood modulo $N$.
We let 
\begin{align}\label{eq:W2N}
W_{2,N}
\end{align}
denote the standard $2$-Wasserstein distance on $\prob(\LN)$ induced by the  distance $\dist$ on $\LN$. 
In the following result we collect some basic properties of the metric $\cWN$.

\begin{proposition}\label{prop:uniform} 
The following assertions hold.
\begin{enumerate}[(i)]
\item The function $(\rho, \sigma) \mapsto \cWN^2(\rho,\sigma)$ is convex on $\D(\LN) \times \D(\LN)$ with respect to linear interpolation. 
\item 
There exists a universal constant $c> 0$ such that
\[
\cWN\leq {c}{\sqrt{d}}\, W_{2,N}\;.
\]
In particular, the diameter of the spaces $(\D(\LN), \cWN)$ is bounded by a constant depending only on the dimension.
\end{enumerate}
\end{proposition}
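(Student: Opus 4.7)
The plan is to establish (i) first via joint convexity of the action functional, then deduce (ii) via an explicit edge-path construction combined with (i).

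For (i), I would observe that the integrand $(v, a, b) \mapsto v^2/\theta(a, b)$ is jointly convex on $\R \times \R_+ \times \R_+$. The logarithmic mean $\theta(a, b) = \int_0^1 a^{1-p} b^p \dd p$ is jointly concave on $\R_+^2$, since each $a^{1-p} b^p$ is concave (its Hessian is negative semidefinite) and concavity is preserved under averaging. The perspective function $(v, r) \mapsto v^2/r$ is jointly convex on $\R \times \R_+$ and non-increasing in $r$; composing with the concave $\theta$ therefore yields joint convexity in $(v, a, b)$. Now fix $\bar\rho_0^j, \bar\rho_1^j \in \D(\LN)$ for $j = 0, 1$ and $\eps > 0$. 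Pick admissible curves $(\rho_t^j, V_t^j)$ with $\int_0^1 \cA_N(\rho_t^j, V_t^j) \dd t \leq \cWN^2(\bar\rho_0^j, \bar\rho_1^j) + \eps$. Since \eqref{eq:contn} is linear in $(\rho, V)$, the combination $(\rho_t^\lambda, V_t^\lambda) := (1-\lambda)(\rho_t^0, V_t^0) + \lambda(\rho_t^1, V_t^1)$ is admissible for the interpolated endpoints, and joint convexity of the integrand gives $\cA_N(\rho_t^\lambda, V_t^\lambda) \leq (1-\lambda) \cA_N(\rho_t^0, V_t^0) + \lambda \cA_N(\rho_t^1, V_t^1)$. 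Integrating and letting $\eps \to 0$ yields the convexity of $\cWN^2$.

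For (ii), I would argue in three steps. First, for adjacent sites $\aa$ and $\aa + \ei$ in $\LN$, define the Dirac-like density $\rho_\aa := N^d \one_\aa \in \D(\LN)$ and consider the competitor $\rho_t = (1-t)\rho_\aa + t\rho_{\aa + \ei}$ carried by an anti-symmetric momentum field supported on the single facet $\RAp$, with value forced by \eqref{eq:contn}. A direct calculation using the $1$-homogeneity of $\theta$ reduces the action, after the normalization $\cWN = \cW_{K_N}/(N\sqrt{2d})$, to $\cWN^2(\rho_\aa, \rho_{\aa + \ei}) \leq N^{-2} \int_0^1 \theta(1-t, t)^{-1} \dd t$. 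Since the logarithmic mean dominates the geometric mean, $\theta(1-t, t) \geq \sqrt{t(1-t)}$, whose reciprocal is integrable on $[0,1]$; thus $\cWN(\rho_\aa, \rho_{\aa + \ei}) \leq c_0/N$ for a universal constant $c_0$. Second, for arbitrary $\aa, \bb \in \LN$, concatenating single-edge paths along a minimal axis-aligned route and using the triangle inequality yields $\cWN(\rho_\aa, \rho_\bb) \leq (c_0/N) \sum_i |a_i - b_i|$; Cauchy--Schwarz then introduces the dimensional factor, giving $\cWN(\rho_\aa, \rho_\bb) \leq c_0 \sqrt{d}\, \dist(\aa, \bb)$. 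Third, for general $\mu, \nu \in \D(\LN)$, fix an optimal $W_{2,N}$-coupling $\pi$ of the corresponding probability measures and write $\mu = \sum_{\aa, \bb} \pi(\aa, \bb) \rho_\aa$, $\nu = \sum_{\aa, \bb} \pi(\aa, \bb) \rho_\bb$. Iterating the convexity from (i) along this decomposition gives $\cWN^2(\mu, \nu) \leq \sum_{\aa, \bb} \pi(\aa, \bb) \cWN^2(\rho_\aa, \rho_\bb) \leq c_0^2 d\, W_{2,N}^2(\mu, \nu)$. The diameter bound then follows immediately, because $(\LN, \dist)$ has diameter at most $\sqrt{d}/2$, so $\cWN$ is bounded by a constant depending only on $d$.

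The delicate point I would handle carefully is the single-edge calculation: one must check that the momentum field vanishes on every facet except $\RAp$ (so that any $0/0$ terms, conventionally interpreted as $0$, do not spoil the action), and track the normalizations arising from $K_N \equiv 1/(2d)$, $\pi_N \equiv N^{-d}$, and the prefactor $N \sqrt{2d}$ in the definition of $\cWN$. The integrability of $1/\theta(1-t, t)$ is the essential analytic input; the logarithmic mean is especially well-suited here because its geometric-mean lower bound tames the endpoint singularity at $t \in \{0, 1\}$, where one of the densities vanishes.
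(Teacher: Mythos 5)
Your proof is correct, but it takes a genuinely different route from the paper. The paper simply invokes two results from \cite{EM11}: Proposition~2.11 there for the convexity in part~(i), and Proposition~2.14 there for a bound of the form $\cWN \le (c/N)\,W_{2,N}'$ with $W_{2,N}'$ the Wasserstein distance for the $\ell^1$ graph metric, followed by the elementary comparison $d_N'(\aa,\bb)\le\sqrt d\,N\,\dist(\aa,\bb)$. You instead reconstruct the underlying mechanism from scratch: (a) joint convexity of $(v,a,b)\mapsto v^2/\theta(a,b)$ via the concavity of the logarithmic mean composed with the perspective function, plus linearity of the continuity equation, yields (i); (b) the single-edge cost $\cWN(\rho_\aa,\rho_{\aa+\ei})\le N^{-1}\bigl(\int_0^1\theta(1-t,t)^{-1}\dd t\bigr)^{1/2}\le\sqrt\pi/N$, with the integrability controlled by the geometric--logarithmic mean inequality; (c) path concatenation and Cauchy--Schwarz to pass from a single edge to arbitrary Dirac pairs; (d) Jensen's inequality for the jointly convex $\cWN^2$ along an optimal $W_{2,N}$-coupling to pass to general densities. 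All the normalisations in your single-edge computation check out (the forced momentum is $V(\RAp)=2dN^d$, and the action collapses to $N^{-2}\theta(1-t,t)^{-1}$ after the $1$-homogeneity of $\theta$ is used), and the endpoint singularity $V^2/\hrho=\infty$ at $t\in\{0,1\}$ is integrable as you observe. The paper's route is shorter because it outsources the hard work to \cite{EM11}; yours is essentially a direct proof of the content of Proposition~2.14 there and has the advantage of being self-contained. Your constant $c_0\sqrt d=\sqrt{\pi d}$ is slightly worse than the paper's $c\sqrt d$ with $c\approx 1.56$, but the statement only asks for a universal constant, so this is immaterial.
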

\begin{proof}
The first assertion has been proved in \cite[Proposition 2.11]{EM11}.
 For the second assertion, we apply \cite[Proposition 2.14]{EM11} to obtain
\[
\cWN\leq \frac{c}{N} W_{2,N}'\;,
\]
where $c \approx 1,56$ is a universal constant and  $W_{2,N}'$ is the  $2$-Wasserstein distance on $\prob(\LN)$ induced by the graph distance $d_N'$ on $\LN$, defined by $d_N'(\aa,\bb):=\sum_i|\aa_i-\bb_i|$.
Since $d_N'(\aa,\bb)\leq \sqrt{d}N \dist(\aa,\bb)$, we have
$W_{2,N}' \leq \sqrt{d}N W_{2,N}$, which implies the desired estimate. 
Since the diameter of the spaces $(\LN, \dist)$ is uniformly bounded by a dimensional constant, the same holds for the spaces $(\prob(\LN),W_{2,N})$, and the final assertion follows as well.
\end{proof}

\subsection{Some properties of the heat semigroup on the discrete and continuous torus}

We endow the continuous torus $\I$ with its natural Riemannian flat distance, and we denote the Lebesgue measure by $\pi$.

Let $({\sfh}_{t})_{t \geq 0}$ be the heat semigroup on $\I$ with generator $\Delta$, acting either on measures or functions. The heat semigroup on $\LN$ is the semigroup generated by the discrete Laplacian $\Delta_N$, and  will be denoted by $({\sfh}^{N}_{t})_{t \geq 0}$.

Let $\sfhh_t$ be the heat kernel on $\I$, i.e., the density of ${\sfh}_t(\delta_0)$ with respect to $\pi$. Similarly, ${\sfhh}_t^{N}$ will denote the heat kernel on $\LN$, which is defined by 
${\sfhh}^{N}_t(x) = {\sfh}^{N}_{t}(N^d \one_{\{ \mathbf{0} \}} )(x)$. 
We thus have the formulas 
\begin{align*}
{\sfh}_tf(x)=\int_{\I}  {\sfhh}_t(x-y)f(y)\dd\pi(y)\;, \qquad 
{\sfh}^{N}_{t}f_N(\aa)= \frac{1}{N^d}\sum_{\bb \in \LN} {\sfhh}_{t}^N(\aa-\bb)f_N(\bb)\;,
\end{align*}
 valid for all $L^1$-functions $f : \I \to \R$ and $f_N: \LN \to \R$.

The heat semigroup on $\I$ acts on vector fields as well coordinatewise. Similarly, the action of ${\sfh}^{N}_{t}$ on a vector field $V_N:\RN\to \R$ can be defined via
\begin{align}\label{eq:heat-vector}
{\sfh}^{N}_{t}V_N(\RAp):=
\frac{1}{N^d}
\sum_{\bb\in\LN}{\sfhh}_{t}^N(\aa-\bb)V_N(R^N_{\bb,i+})\;.
\end{align}

Given a function $f:\I\to\R$, its Lipschitz constant will be denoted by $\Lip(f)$. Similarly, we define the Lipschitz constant of a function $f:\LN\to\R$  by
\begin{align*}
\Lipn(f):=\sup_{\aa \neq \bb} 
  \frac{|f(\aa) - f(\bb)|}{\dist(\aa,\bb)}\;.
\end{align*}
The propositions below collect some basic properties of the heat flows that we will use in the sequel.

\begin{proposition}[Heat flow on the continuous torus]\label{prop:heatcont}
The following assertions hold for all $s > 0$.
\begin{enumerate}[(i)]
\item There exist constants $c(s)> 0$ and $C(s) < \infty$ such that for any $\mu\in\prob(\I)$ the density $\rho_s$ of ${\sfh}_s \mu$ satisfies 
\begin{align*}
\rho_s\geq c(s)\qquad \text{and}\qquad \Lip(\rho_s) \leq C(s)\;.
\end{align*}
Furthermore, there exists a dimensional constant $C < \infty$ such that
\begin{align*}
W_2({\sfh}_s \mu, \mu) \leq C \sqrt{s}\;.
\end{align*}
\item There exists a constant $C(s)< \infty$ such that for any $f \in L^1(\I)$ we have
\begin{align*}
\|{\sfh}_s f\|_{L^\infty}  + \Lip({\sfh}_s f)
    \leq C(s) \|f\|_{L^1} \;.
\end{align*}

\item Let $(\mu_t)\subset\prob(\I)$ be a geodesic, let $v_t$ be the corresponding velocity vector fields achieving the minimum in \eqref{eq:BeBr}, and let $\rho_{s,t}$ and $V_{s,t}$ be the densities of ${\sfh}_s(\mu_t)$ and ${\sfh}_s(v_t\mu_t)$ respectively. Then, $t\mapsto(\rho_{s,t},V_{s,t})$ is a solution to the continuity equation \eqref{eq:contcont}, and we have
\begin{equation}
\label{eq:ricci0}
\int_0^1\int_{\I}\frac{V_{s,t}^2(x)}{\rho_{s,t}(x)}\dd x\dd t\leq W_2^2(\mu_0,\mu_1)\;.
\end{equation}
\end{enumerate}
\end{proposition}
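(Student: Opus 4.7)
My plan is to reduce each of the three assertions to standard smoothing properties of the heat kernel $\sfhh_s$, which for every $s>0$ is a smooth and strictly positive function on the compact torus $\I$; in particular $c(s) := \min_\I \sfhh_s > 0$ and $C(s) := \|\nabla \sfhh_s\|_\infty < \infty$.

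For (i), starting from $\rho_s(x) = \int_\I \sfhh_s(x-y)\dd\mu(y)$, the lower bound $\rho_s \geq c(s)$ and the Lipschitz bound $\Lip(\rho_s) \leq C(s)$ are inherited pointwise from the corresponding bounds on the kernel. The estimate $W_2(\sfh_s \mu,\mu) \leq C\sqrt{s}$ I would obtain probabilistically: realize $\sfh_s\mu$ as the law of $X + B_s$ with $X \sim \mu$ and an independent Brownian motion $B$ on $\I$, so that the coupling $(X, X+B_s)$ yields $W_2^2(\mu, \sfh_s\mu) \leq \E|B_s|^2 \leq C s$.

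For (ii), both bounds follow directly from $\sfh_s f(x) = \int_\I \sfhh_s(x-y) f(y)\dd\pi(y)$: H\"older gives $\|\sfh_s f\|_\infty \leq \|\sfhh_s\|_\infty \|f\|_{L^1}$, and differentiating under the integral sign gives $\Lip(\sfh_s f) \leq \|\nabla \sfhh_s\|_\infty \|f\|_{L^1}$.

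For (iii), the continuity equation for $(\rho_{s,t}, V_{s,t})$ I would verify by applying the spatial convolution $\sfh_s$ to the continuity equation for $(\mu_t, v_t\mu_t)$ and exchanging $\sfh_s$ with $\partial_t$ and $\nabla\cdot$ --- equivalently, testing the continuity equation for $\mu_t$ against $\sfh_s \phi$ for an arbitrary test function $\phi$. The main step is the energy estimate \eqref{eq:ricci0}. Here I would write
\[
V_{s,t}(x) = \int_\I \sfhh_s(x-y)\, v_t(y)\dd\mu_t(y)
\]
and apply the Cauchy--Schwarz inequality with respect to the probability measure $\nu_x(\dd y) := \sfhh_s(x-y)\dd\mu_t(y)/\rho_{s,t}(x)$ to get the pointwise bound
\[
\frac{|V_{s,t}(x)|^2}{\rho_{s,t}(x)} \leq \int_\I |v_t(y)|^2 \sfhh_s(x-y)\dd\mu_t(y).
\]
Integrating in $x$ via Fubini and the identity $\int_\I \sfhh_s(x-y)\dd x = 1$ gives $\int_\I |V_{s,t}|^2/\rho_{s,t}\dd x \leq \int_\I |v_t|^2 \dd\mu_t$, and integrating in $t$ using the Benamou--Brenier optimality of $(\mu_t, v_t)$ in \eqref{eq:BeBr} delivers \eqref{eq:ricci0}. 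I expect no serious obstacle; the only point that needs mild care is that, when $\mu_t$ is not absolutely continuous, $v_t\mu_t$ must be interpreted as a vector-valued measure, but the convolution with $\sfhh_s$ still produces a smooth vector field and the Cauchy--Schwarz step goes through unchanged.
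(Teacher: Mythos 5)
Your proposal is correct and follows essentially the same route as the paper: the bounds in (i) and (ii) come from the convolution representation, the $W_2$ estimate is the elementary heat-kernel-scaling/coupling bound (the paper reduces to Dirac masses via convexity of $W_2^2$ and invokes parabolic scaling, which is the same content as your $\E|B_s|^2\leq Cs$ coupling), and for (iii) the paper appeals to joint convexity of $(x,a)\mapsto x^2/a$ together with the fact that $\sfh_s$ is a convolution (citing Lemma 8.1.10 of AGS), which is precisely the Jensen/Cauchy--Schwarz step you carry out explicitly.
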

\begin{proof}
The first assertions in $(i)$, with $c(s) = \inf_{x \in \I} \sfhh_s(x)$ and $C(s) = {\Lip}(\sfhh_s)$, are easily deduced from the representation of the heat semigroup as a convolution semigroup. The same method can be used to prove $(ii)$. To prove the last claim in $(i)$, notice that by the convexity of $W_2^2$ it is sufficient to prove the claim when $\mu$ is a Dirac mass. In this case the result follows from the fact that the heat kernel on the torus can be represented by periodization of the heat kernel on $\R^d$, and the parabolic scaling of the latter.

Finally, $(iii)$ follows from the convexity of $\R^d\times\R^+\ni(x,a)\mapsto \frac{x^2}a$ and the fact that  ${\sfh}_s$ is a convolution operator, see, e.g., Lemma 8.1.10 in \cite{AGS08}.
\end{proof}

\begin{proposition}[Heat flow on the discrete torus]\label{prop:heatdisc} The following assertions hold for $s > 0$. 
\begin{enumerate}[(i)]
\item There exists a constant $C(s) < \infty$ depending only on $s > 0$ and the dimension $d$, such that for any $\rho_N\in\D(\LN)$ we have 
\begin{align*}
\Lip_N({\sfh}_s^N \rho_N)\leq \min\big\{{C}(s),\ \Lip_N(\rho_N)\big\}\;.
\end{align*}
\item For any $\rho_N\in \D(\LN)$ and any momentum vector field $V_N:\RN\to\R^d$ we have
\[
\cA_N({\sfh}^{N}_{s}\rho_N,{\sfh}^{N}_{s}V_N)\leq\cA_N(\rho_N,V_N)\;.
\]
\end{enumerate}
\end{proposition}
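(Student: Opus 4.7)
For part (i), my plan is to work directly from the convolution representation
\[
{\sfh}^N_s \rho_N(\aa) = \frac{1}{N^d}\sum_{\bb\in\LN} {\sfhh}^N_s(\aa-\bb)\, \rho_N(\bb).
\]
The inequality $\Lip_N({\sfh}^N_s \rho_N) \leq \Lip_N(\rho_N)$ follows because discrete differences commute with this convolution: with $D_i \rho(\aa) := \rho(\aa+\ei)-\rho(\aa)$ one has $D_i ({\sfh}^N_s \rho_N) = {\sfh}^N_s (D_i \rho_N)$, and ${\sfh}^N_s$ is an $L^\infty$-contraction since it is Markovian. For the bound $\Lip_N({\sfh}^N_s \rho_N) \leq C(s)$, which must not depend on the a priori arbitrary $\Lip_N(\rho_N)$, I would instead put the difference on the kernel:
\[
{\sfh}^N_s \rho_N(\aa) - {\sfh}^N_s \rho_N(\bb) = \frac{1}{N^d}\sum_{\cc}\bigl({\sfhh}^N_s(\aa-\cc) - {\sfhh}^N_s(\bb-\cc)\bigr)\rho_N(\cc),
\]
and estimate using the fact that $\rho_N$ is a probability density together with a uniform-in-$N$ bound $\Lip_N({\sfhh}^N_s) \leq C(s)$. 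The latter I would extract from the spectral representation: the eigenvalues of $-\Delta_N$ are $2N^2\sum_j(1-\cos(2\pi n_j/N)) \gtrsim \min(|\nn|^2, N^2)$ uniformly in $N$, so the high-frequency contributions to ${\sfhh}^N_s$ are damped fast enough in $s$ to make the discrete gradient of ${\sfhh}^N_s$ uniformly controlled.

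For part (ii), the strategy is joint convexity plus Jensen plus translation invariance. I would introduce $\Phi(x,u,v) := x^2/\theta(u,v)$ on $\R\times\R_+\times\R_+$. Since $\theta$ is the logarithmic mean---concave, positive, and positively $1$-homogeneous---a key property underlying the whole construction (cf.~\cite{EM11}) is that $\Phi$ is jointly convex. Now express the heat flow as the \emph{same} convex combination acting on both densities and momentum fields: setting $p_\bb := {\sfhh}^N_s(\bb)/N^d$ one has $p_\bb \geq 0$, $\sum_\bb p_\bb = 1$, and, after changing summation variable in \eqref{eq:heat-vector},
\[
{\sfh}^N_s \rho_N(\aa) = \sum_\bb p_\bb\, \rho_N(\aa-\bb), \qquad {\sfh}^N_s V_N(R^N_{\aa,i+}) = \sum_\bb p_\bb\, V_N(R^N_{\aa-\bb,i+}).
\]
Applying Jensen's inequality to $\Phi$ at each facet $R = \RAp$ then yields
\[
\frac{\bigl({\sfh}^N_s V_N(\RAp)\bigr)^2}{\theta\bigl({\sfh}^N_s \rho_N(\aa), {\sfh}^N_s \rho_N(\aa+\ei)\bigr)} \leq \sum_\bb p_\bb \frac{V_N(R^N_{\aa-\bb, i+})^2}{\theta\bigl(\rho_N(\aa-\bb), \rho_N(\aa+\ei-\bb)\bigr)}.
\]
Summing over $\RAp \in \RN$, swapping the order of summation, and re-indexing $\aa \mapsto \aa+\bb$ on the torus (translation invariance of the discrete action) turns every inner sum into $\cA_N(\rho_N,V_N)$, and since the $p_\bb$ sum to one, the desired estimate $\cA_N({\sfh}^N_s\rho_N,{\sfh}^N_s V_N)\le \cA_N(\rho_N,V_N)$ follows.

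The real content, and hence the main potential obstacle, is the joint convexity of $\Phi$; in this framework, however, it is already established for the logarithmic mean and can be invoked directly from \cite{EM11}. The remaining work in (i) is the uniform spectral bound on $\Lip_N({\sfhh}^N_s)$, which is quantitative but routine; the remaining work in (ii) is purely bookkeeping on the facet set $\RN$ to ensure that the heat flow on scalars and on vector fields really is realised by the same family of weights $(p_\bb)_{\bb\in\LN}$.
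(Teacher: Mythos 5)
Your proposal is correct and follows essentially the same approach as the paper. Both parts rest on the same ideas: for (i), putting the discrete difference on the heat kernel and bounding $\Lip_N({\sfhh}^N_s)$ uniformly in $N$ via the discrete spectral decomposition (the paper tensorizes down to the one-dimensional kernel while you work with the $d$-dimensional spectrum directly, but this is cosmetic), and for (ii), joint convexity of $(x,u,v)\mapsto x^2/\theta(u,v)$ combined with the convolution structure of ${\sfh}^N_s$ and translation invariance of $\cA_N$, which is precisely the one-line argument the paper invokes.
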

\begin{proof}
The estimate $\Lip_N({\sfh}_s^N \rho_N)\leq  \Lip_N(\rho_N)$ in $(i)$ is a simple consequence of the fact that the heat semigroup consists of convolution operators. Taking the convexity of $(x,a,b)\mapsto\frac{x^2}{\theta(a,b)}$ into account, this also gives $(ii)$. 

To prove the remaining bound in $(i)$, we note that for any probability density $\rho_N \in \PL$,
\begin{align*}
  | {\sfh}^N_s \rho_N(\aa) - {\sfh}^N_s \rho_N(\bb) |
   &  = 
    \frac{1}{N^d} 
    \bigg| \sum_{\cc \in \LN}
      \Big( {\sfhh}_s^N(\aa - \cc) - {\sfhh}_s^N(\bb - \cc) \Big)\rho_N(\cc) \bigg|
 \\&  \leq
    \frac{1}{N^d} 
      \Bigg(\sum_{\cc \in \LN} \rho_N(\cc)\bigg)
      \sup_{\cc \in \LN} \big| {\sfhh}_s^N(\aa - \cc) - {\sfhh}_s^N(\bb - \cc) \big|
 \\&  =
	  \sup_{\cc \in \LN} \big| {\sfhh}_s^N(\aa - \cc) - {\sfhh}_s^N(\bb - \cc) \big|\;.
\end{align*}
Since ${\sfhh}^{N}_{s}(\aa) = {\sfhh}^{1,N}_{s}(a_1)\cdot \ldots \cdot {\sfhh}^{1,N}_{s}(a_d)$, where ${\sfhh}^{1,N}$ denotes the heat kernel in one dimension, we infer that 
\begin{align*}
 \big| {\sfhh}_s^N(\aa) - {\sfhh}_s^N(\bb) \big|
 & \leq  \| \sfhh_s^{1,N}\|_{L^\infty}^{d-1}
   \sum_{k=1}^d |\sfhh_s^{1,N}(a_k) - \sfhh_s^{1,N}(b_k)|
\\&  \leq \sqrt{d} \dist(\aa,\bb)\;
   \| \sfhh_s^{1,N}\|_{L^\infty}^{d-1} \Lipn(\sfhh_s^{1,N})\;,
\end{align*}
and therefore
\begin{align}\label{eq:1dremains}
  \Lipn( {\sfh}^N_s \rho_N)
   \leq\sqrt{d}\;
   \| \sfhh_s^{1,N}\|_{L^\infty}^{d-1} \Lipn(\sfhh_s^{1,N})\;,
\end{align}
so it remains to obtain bounds on the heat kernel in one dimension. These can be obtained using the well-known (and easy to check) fact that, if $d =1$, the spectrum of the operator $-\Delta_N$ consists of the eigenvalues 
\begin{align*}
 \lambda_\ell = 2N^2\big(1 - \cos({2\pi \ell}/{N})\big)\;, \qquad
  \ell \in L_N := 
   \bigg\{ z \in \Z \ : \ \Big\lfloor-\frac{N}{2}\Big\rfloor + 1 \leq z \leq
    \Big\lfloor\frac{N}{2}\Big\rfloor    \bigg\}\;.
\end{align*}
Note that $\lambda_\ell = \lambda_{-\ell}$. The corresponding eigenvectors $v_\ell$ are given by
\begin{align*}
 v_\ell(\aa) =  \exp\Big( \frac{2\pi i \ell \aa}{N} \Big)\;, \qquad \ell \in L_N\;.
\end{align*}
As a consequence, the heat kernel $\sfhh_s^{1,N}$ can be written explicitly as 
\begin{align*}
\sfhh_s^{1,N}(\aa) = \sum_{\ell \in L_N} e^{-\lambda_\ell s} v_\ell(\aa)\;.
\end{align*}
We shall use the fact that there exist constants $c> 0$ and $\tilde c < \infty$ such that for all $N \geq 1$ and $\ell \in L_N$,
\begin{align*}
 |\lambda_\ell| \geq  c \ell^2\;,\qquad
 \|v_\ell\|_{L^\infty} 
 \leq  1\;,\qquad \text{and} \qquad
 \Lip_N(v_\ell) \leq \tilde c |\ell| \;.
\end{align*}
It follows that for some constant $C > 0$ and all $\aa, \bb \in \LN$,
\begin{align*}
  \big| \sfhh_s^{1,N}(\aa)\big|
  &  \leq \sum_{\ell \in L_N} e^{-\lambda_\ell s} | v_\ell(\aa) |
  \leq \sum_{\ell \in \Z} e^{- c \ell^2 s } 
  \leq C \Big( 1 + \frac{1}{\sqrt{s}}\Big)\;,\\
  \big| \sfhh_s^{1,N}(\aa) - \sfhh_s^{1,N}(\bb) \big|
  &  \leq \sum_{\ell \in L_N} e^{-\lambda_\ell s} | v_\ell(\aa) - v_\ell(\bb) |
  \leq C \sum_{\ell \in \Z} \ell e^{- c \ell^2 s }  \dist(\aa,\bb)
  \leq \frac{C}{s}  \dist(\aa,\bb)\;,
\end{align*}
so that $\|\sfhh_s^{1,N}\|_{L^\infty} \leq C (1 + s^{-1/2})$ and 
$\Lip_N(\sfhh_s^{1,N}) \leq C s^{-1}$. Plugging these estimates into \eqref{eq:1dremains}, we obtain the desired result.
\end{proof}

\section{Proof of the main result}
\label{sec:proof}

\subsection{Ingredients and structure of the proof}
\label{subsec:ingred}

In order to prove the stated Gromov-Hausdorff convergence of the spaces $(\cP(\LN), \cWN)$, we will introduce the natural mappings from the continuous torus to the discrete one, and those going the other way around.

First we construct discrete measures by integration over cubes, and discrete vector fields by integration over facets:

\begin{definition}[From $\I$ to $\LN$]\label{def:PN}
Given a probability measure $\mu\in\prob(\I)$ and $N\in\N$ the probability density $\PN(\mu)\in\prob(\LN)$ is defined as
\[
\PN(\mu)(\aa):=N^d\mu({\QA})\;.
\]
Similarly, given a continuous momentum vector field $V = (V_1, \ldots, V_d) : \I \to \R^d$ we define $\PN (V): \RN \to \R$ by
\begin{align*}
 \PN (V)(R) := 2d N^d \int_{R} V_i(x)\dd x\;, \qquad  R = \RApm \in \RN\;.
\end{align*}
\end{definition}

Probability densities on $\I$ are defined by piecewise constant extensions of densities on $\LN$, and  vector fields on $\I$ are defined by linear interpolation.

\begin{definition}[From $\LN$ to $\I$]\label{def:QN}
Given a probability density $\rho^N\in\prob{(\LN)}$ and a momentum vector field $V^N:\RN\to\R$, the probability measure $\QN(\rho^N)\pi\in \prob(\I)$ and the momentum vector field $\QN(V^N):\I\to\R^d$ are defined as
\begin{align*}
\QN(\rhon)(x) &:= \rhon(\aa)\;,\\
 \QN(\Vn)_i(x) &:= \frac{1}{2dN}\Big( 
   (1 - N x_i + a_i) \Vn(\RAm)
   +    (N x_i - a_i) \Vn(\RAp)
 \Big)\;,
\end{align*}
where $\aa = (a_1, \ldots, a_d)\in \LN$ is uniquely determined by the condition $x = (x_1, \ldots, x_d) \in \QA$.
\end{definition}

The maps $\PN$, $\QN$ will be the ones that we use to prove Gromov-Hausdorff convergence. They are constructed in such a way that ensures that solutions of the continuity equation are mapped to solutions of the continuity equation.
\begin{proposition}\label{prop:solcont}
The following assertions hold:
\begin{enumerate}
\item 
Let $(\rho_t,V_t)$ be a solution to the continuity equation \eqref{eq:contcont} such that the mapping $x\mapsto V_t(x)$ is continuous for almost every $t$. Then $(\PN(\rho_t),\PN(V_t))$ solves the discrete continuity equation \eqref{eq:contn}.
\item 
Vice versa, let $(\rho_{N,t},V_{N,t})$ be a solution to the discrete continuity equation \eqref{eq:contn}. Then $(\QN(\rho_{N,t}),\QN(V_{N,t}))$ solves the continuity equation \eqref{eq:contcont}.
\end{enumerate}

\end{proposition}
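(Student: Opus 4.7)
The plan is that both statements reduce to matching the definitions with the divergence theorem on cubes; this is exactly what the maps $\PN$ and $\QN$ are designed to make manifest.

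For part (1), I would test the distributional equation \eqref{eq:contcont} against $\psi(t)\chi_\eps(x)$, where $\psi\in C^\infty_c((0,1))$ and $\chi_\eps\in C^\infty(\I)$ is a smooth approximation of $\one_{\QA}$, and send $\eps\to 0$. The time term converges to $\int_0^1\psi'(t)\mu_t(\QA)\dd t$ using weak continuity of $t\mapsto\mu_t$ (with $\mu_t=\rho_t\pi$). The spatial term passes, via the divergence theorem on the cube $\QA$ (which is legitimate in the limit precisely because $V_t$ is continuous in $x$), to $-\int_0^1\psi(t)\int_{\partial\QA}V_t\cdot\nu\dd S\dd t$. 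Decomposing $\partial\QA$ into the facets $\RApm$ with outward normals $\pm\ei$, this gives
\[
\ddt\mu_t(\QA)=-\sum_{i=1}^d\Big(\int_{\RAp}V_{t,i}\dd S-\int_{\RAm}V_{t,i}\dd S\Big)
\]
in the sense of distributions in $t$. Multiplying by $N^d$ and inserting $\int_{\RApm}V_{t,i}\dd S=\frac{1}{2dN^d}\PN(V_t)(\RApm)$ from Definition~\ref{def:PN} yields exactly the discrete continuity equation \eqref{eq:contn} for $(\PN(\rho_t),\PN(V_t))$.

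For part (2), I would test the candidate equation against an arbitrary $\phi\in C^\infty([0,1]\times\I)$ with $\phi(0,\cdot)=\phi(1,\cdot)=0$, and set $\Phi_\aa(t):=\int_{\QA}\phi(t,x)\dd x$. Since $\QN(\rho_{N,t})$ is piecewise constant on cubes, the time term collapses to $\sum_\aa\int_0^1\rho_{N,t}(\aa)\Phi_\aa'(t)\dd t$; integrating by parts in $t$ and invoking \eqref{eq:contn} rewrites this as $\frac{1}{2d}\sum_\aa\int_0^1\sum_i\big(V_{N,t}(\RAp)-V_{N,t}(\RAm)\big)\Phi_\aa(t)\dd t$. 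I would treat the gradient term cube by cube: on each $\QA$ the component $\QN(V_{N,t})_i$ is affine in $x_i$ with boundary values $V_{N,t}(\RApm)/(2dN)$, so a one-dimensional integration by parts in $x_i$ (with the other coordinates integrated out freely) produces an interior contribution $-\frac{1}{2d}\big(V_{N,t}(\RAp)-V_{N,t}(\RAm)\big)\Phi_\aa(t)$ together with face terms on $\RApm$. The key observation is that, by construction, $\QN(V_N)_i$ is continuous across the face $\RAp=R^N_{\aa+\ei,i-}$ (both sides take the common value $V_N(\RAp)/(2dN)$), so the face contributions from neighbouring cubes cancel pairwise when summed over $\aa$. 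The remaining interior terms exactly offset the time contribution, verifying \eqref{eq:contcont} distributionally.

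The only genuinely nontrivial step is the limit passage in part (1): $\one_{\QA}$ is not itself a valid smooth test function, and one has to exploit the spatial continuity of $V_t$ to justify the convergence $\int_\I\nabla\chi_\eps\cdot V_t\dd x\to-\int_{\partial\QA}V_t\cdot\nu\dd S$. Part (2) is pure bookkeeping: the definitions of $\QN(\rho_N)$ as piecewise constant and of $\QN(V_N)$ as linear-in-$x_i$-on-each-cube with matching values on $i$-facets are engineered precisely so that the boundary terms from adjacent cubes cancel and only the expected divergence remains.
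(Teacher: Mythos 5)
Your proof is correct and fleshes out precisely what the paper's terse one-line proof (``direct consequences of the definitions and the Gauss--Green Theorem'') is referring to: for part (1) you implement Gauss--Green on each cube by passing to the limit with smooth approximations of $\one_{\QA}$, using the assumed spatial continuity of $V_t$ to make sense of the facet integrals; for part (2) you exploit the design of $\QN$ (piecewise constant densities, affine-in-$x_i$ momentum fields with matching traces on shared facets) to cancel all boundary contributions and recover the discrete divergence on each cube. This is the same approach as the paper, just written out in full.
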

\begin{proof}
These statements are direct consequences of the definitions and the Gauss--Green Theorem.
\end{proof}

It follows from the definitions that $\PN\circ \QN$ is the identity operator on $\prob{(\LN)}$. On the other hand, $\QN\circ\PN$ is a good approximation of the identity in the following sense.

\begin{lemma}\label{lem:QP}
For all $\mu\in\prob(\I)$ and all $N \geq 2$ we have
\begin{equation}
\label{eq:qnpn}
W_2(\QN(\PN(\mu)),\mu)\leq\frac{\sqrt{d}}{N}\;.
\end{equation}
\end{lemma}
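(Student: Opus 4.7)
The plan is to build an explicit transport plan between $\mu$ and $\QN(\PN(\mu))$ that moves mass only within each cube $Q_\aa$, and then to use the fact that each cube has diameter $\sqrt{d}/N$ to control the cost.

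First I would unpack what $\QN(\PN(\mu))$ is as a measure on $\I$. By Definition \ref{def:PN} and Definition \ref{def:QN}, on each cube $Q_\aa$ the density $\QN(\PN(\mu))$ equals the constant $\PN(\mu)(\aa) = N^d \mu(Q_\aa)$, so the measure $\QN(\PN(\mu))\pi$ restricted to $Q_\aa$ is the uniform measure on $Q_\aa$ with total mass $\mu(Q_\aa)$. Denote this measure by $\nu_\aa$ and set $\mu_\aa := \mu|_{Q_\aa}$, so that $\mu = \sum_\aa \mu_\aa$ and $\QN(\PN(\mu))\pi = \sum_\aa \nu_\aa$, with $\mu_\aa$ and $\nu_\aa$ having equal total mass $\mu(Q_\aa)$.

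Next I would construct a coupling cube by cube. If $\mu(Q_\aa) > 0$, pick any probability coupling $\tilde\pi_\aa$ of the normalisations $\mu_\aa/\mu(Q_\aa)$ and $\nu_\aa/\mu(Q_\aa)$ (for instance the product coupling), and set $\pi_\aa := \mu(Q_\aa)\,\tilde\pi_\aa$; for cubes with $\mu(Q_\aa) = 0$ take $\pi_\aa = 0$. Then $\pi := \sum_{\aa \in \LN} \pi_\aa$ is a coupling of $\mu$ and $\QN(\PN(\mu))\pi$, and its support lies in $\bigcup_\aa Q_\aa \times Q_\aa$.

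Finally I would estimate the cost. Since the Riemannian distance on $\I$ satisfies $d(x,y) \leq \diam(Q_\aa) = \sqrt{d}/N$ for all $x,y \in Q_\aa$, one gets
\begin{equation*}
W_2^2(\QN(\PN(\mu)),\mu) \leq \int_{\I \times \I} d(x,y)^2 \dd\pi(x,y) = \sum_{\aa \in \LN} \int_{Q_\aa \times Q_\aa} d(x,y)^2 \dd\pi_\aa \leq \frac{d}{N^2} \sum_{\aa \in \LN} \mu(Q_\aa) = \frac{d}{N^2}\;,
\end{equation*}
and taking square roots yields the claim. The argument is essentially trivial; the only mild subtlety is to note that the straight-line distance on $Q_\aa$ coincides with the flat torus distance as long as $N \geq 2$, which is exactly the hypothesis in the statement, so no issues arise from wrap-around.
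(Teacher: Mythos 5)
Your proposal is correct and follows essentially the same route as the paper: the paper's proof simply notes that the two measures assign the same mass to each cube $Q_\aa$ and then bounds $W_2^2$ by $\sum_\aa \mu(Q_\aa)\diam(Q_\aa)^2$, leaving implicit exactly the cube-by-cube coupling you spell out.
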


\begin{proof}
Since both measures agree on each cube $\QA$, it follows that 
\begin{align*}
 W_2(\QN(\PN(\mu)),\mu)^2 
   \leq \sum_{\aa \in \LN} \mu(\QA) \diam(\QA)^2\;.
\end{align*}
Taking into account that the diameter of each $\QA$ equals ${\sqrt{d}}/{N}$, the result follows.
\end{proof}

The following simple result allows us to compare the $2$-Wasserstein distances on $\cP(\I)$ and $\cP(\LN)$. Recall that $W_{2,N}$ has been defined in  \eqref{eq:W2N}.

\begin{lemma}\label{lem:Wass-comparision}
For all $\mu_0, \mu_1 \in \cP(\I)$ we have
\begin{align*}
 W_{2,N}( \PN (\mu_0), \PN(\mu_1) ) \leq
 W_2(\mu_0, \mu_1)  + \frac{\sqrt{d}}{N}\;.
\end{align*}
\end{lemma}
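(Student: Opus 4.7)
The plan is to build an explicit coupling on $\LN\times\LN$ by pushing forward an optimal coupling on $\I\times\I$, and to bound its cost via the triangle inequality in $L^2(\gamma)$.

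First I would fix an optimal coupling $\gamma\in\cP(\I\times\I)$ for $W_2(\mu_0,\mu_1)$ and define the projection $\Phi_N\colon\I\to\LN$ by $\Phi_N(x)=\aa$ whenever $x\in\QA$. The push-forward $\tilde\gamma:=(\Phi_N\times\Phi_N)_\#\gamma$ is a probability measure on $\LN\times\LN$ whose marginals are exactly $\PN(\mu_0)\pi_N$ and $\PN(\mu_1)\pi_N$, by the very definition of $\PN$. Hence
\begin{equation*}
W_{2,N}^2(\PN(\mu_0),\PN(\mu_1))\;\leq\;\int_{\I\times\I}\dist\bigl(\Phi_N(x),\Phi_N(y)\bigr)^2\dd\gamma(x,y).
\end{equation*}

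The heart of the matter is the pointwise bound
\begin{equation*}
\dist\bigl(\Phi_N(x),\Phi_N(y)\bigr)\;\leq\;|x-y|_{\I}+\frac{\sqrt{d}}{N}\qquad\text{for all }x,y\in\I.
\end{equation*}
To see this, write $\aa=\Phi_N(x)$ and $\bb=\Phi_N(y)$, lift to $\R^d$ so that the "corner" $\aa/N$ lies in $Q_\aa$, and decompose
\begin{equation*}
\frac{\bb-\aa}{N}\;=\;(y-x)+\Bigl(x-\frac{\aa}{N}\Bigr)-\Bigl(y-\frac{\bb}{N}\Bigr).
\end{equation*}
The crucial point is that both bracketed residues lie in $[0,1/N]^d$, so their \emph{difference} lies in $[-1/N,1/N]^d$ and has Euclidean norm at most $\sqrt{d}/N$—this cancellation avoids the naive factor of~$2$. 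Choosing the representative of $y-x$ achieving the torus distance $|x-y|_{\I}$ and transferring the corresponding integer shift to $\bb$ yields the claimed inequality on the quotient.

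Finally, Minkowski's inequality in $L^2(\gamma)$ gives
\begin{equation*}
\sqrt{\int\bigl(|x-y|_\I+\tfrac{\sqrt{d}}{N}\bigr)^2\dd\gamma}\;\leq\;\sqrt{\int|x-y|_\I^2\dd\gamma}+\frac{\sqrt{d}}{N}\;=\;W_2(\mu_0,\mu_1)+\frac{\sqrt{d}}{N},
\end{equation*}
which finishes the proof. The only delicate point is the pointwise estimate above; everything else is routine once the coupling has been identified. The bookkeeping with the periodic identification $\I=\R^d/\Z^d$ versus $\LN=(\Z/N\Z)^d$ is the place where one must be careful, but it amounts to choosing consistent lifts and does not hide any real difficulty.
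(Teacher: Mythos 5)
Your proof is correct and follows essentially the same route as the paper: define the projection map onto $\LN$, establish the pointwise bound $\dist(T_N x, T_N y) \leq |x-y| + \sqrt{d}/N$, push forward an optimal coupling, and conclude via Minkowski's inequality. The paper leaves the coupling/pushforward step and the handling of periodic lifts implicit, while you spell them out; otherwise the two arguments are the same.
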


\begin{proof}
Define $T_N : \I \to \LN$ by $T_N(x) := \aa$ whenever $x \in \QA$.
Since
 $|(T_N x)_i - (T_N y)_i| \leq 1 + N|x_i - y_i|$ for $x,y \in \I$, we have
\begin{align*}
 \dist(T_N x, T_N y) \leq 
 | x - y | + \frac{\sqrt{d}}{N}\;.
\end{align*}
Using the fact that $\PN(\mu_i) = (T_N)_\# \mu_i$, the result follows.
\end{proof}

In order to carry out our estimates, we will sometimes need some regularity on the probability densities involved. For this reason, we introduce the following set.
\begin{definition}[Regular densities]
Let $\delta>0$. Then the set $\D_\delta(\LN)\subset \D(\LN)$ is the set of probability densities $\rho_N\in\D(\LN)$ such that
\begin{align*}
\min_{\aa\in\LN}\rho_N(\aa)\geq \delta\;,\qquad\qquad\Lip_N(\rho_N)\leq \delta^{-1}\;.
\end{align*}
\end{definition}
Notice that the projections $\PN$ preserve this sort of regularity, i.e.,
\begin{equation}
\label{eq:pnreg}
\Lip_N(\PN(\rho))\leq \Lip(\rho)\;,\qquad \qquad\min_{\aa\in\LN}\PN(\rho)(\aa)\geq \inf_{x\in\I}\rho(x)\;,
\end{equation}
as is readily checked from the definitions.

The set $\D_\delta(\LN)$ is endowed with the following distance, which is obtained by minimizing the action functional over all paths in the space of regular densities.

\begin{definition}[The distance $\mathcal W_{N,\delta}$]\label{def:wdelta}
Let $\delta>0$ and $\rho_{N,0},\rho_{N,1}\in \D_\delta(\LN)$. The distance $\mathcal W_{N,\delta}(\rho_{N,0},\rho_{N,1})$ is defined as
\[
\big(\mathcal W_{N,\delta}(\rho_{N,0},\rho_{N,1})\big)^2:=\inf
\bigg\{\int_0^1\cA_N(\rho_{N,t},V_{N,t})\dd t\bigg\}\;,
\]
the infimum being taken among all solutions $(\rho_{N,t},V_{N,t})$ of the continuity equation \eqref{eq:contn} such that $\rho_{N,t}\in\D_{\delta}(\LN)$ for any $t\in[0,1]$.
\end{definition}

The last tool that we need is a variant of the distance $\cWN$ on $\cP(\LN)$, where instead of the logarithmic mean $\theta$ one considers the harmonic mean $\tilde\theta$ given by 
\begin{align*}
\tilde\theta(a,b):=\frac{2ab}{a+b}
\end{align*}
for any $a,b> 0$. If $a = 0$ or $b = 0$, we set $\tilde\theta(a,b) = 0$. 
For $\rho_N\in\D(\LN)$ and $R =  R_{\aa,i+}^N\in\RN$ we put 
\begin{align*}
\tilde\rho_N(R):=\tilde\theta(\rho_{N}(\aa),\rho_N(\aa+\ei))\;.
\end{align*}

\begin{definition}[The distance ${\widetilde{\mathcal W}}_{N}$]\label{def:wtilde}

For $\rho_{N,0},\rho_{N,1}\in\D(\LN)$, the metric ${\widetilde{\mathcal W}}_{N}(\rho_{N,0},\rho_{N,1})$ is defined as
\[
\big({\widetilde{\mathcal W}}_{N}(\rho_{N,0},\rho_{N,1})\big)^2:=\inf
\bigg\{\int_0^1 \frac{1}{4d^2 N^{d+2}}\sum_{R\in\RN}\frac{V_{N,t}(R)^2}{\tilde\rho_{N,t}(R)}\dd t\bigg\}\;,
\]
the infimum being taken among all solutions $(\rho_{N,t},V_{N,t})$ of the continuity equation \eqref{eq:contn}.
\end{definition}

Distances of this form have already been introduced in \cite{Ma11}.
Notice that since $\tilde\theta(a,b)\leq \theta(a,b)$ for any $a,b\geq 0$, it follows immediately that ${\widetilde{\mathcal W}}_{N}\geq \cWN.$

\bigskip

Let us now describe our strategy to prove Theorem \ref{thm:main}. We start with two measures $\mu_0,\mu_1\in\prob(\I)$, regularize them a bit using the heat flow for a short time $s>0$, and then show (Proposition \ref{prop:upper}) that for some constant $C(s) < \infty$ (independent on $\mu_0,\mu_1$) we have
\[
\cWN(\PN({\sfh}_s(\mu_0)), \PN({\sfh}_s(\mu_1)))
\leq  W_2(\mu_0, \mu_1) +  \frac{C(s)}{\sqrt{N}}\;.
\]
This will follow quite easily. The converse inequality will be harder to achieve, as the natural inequality that one obtains  for $\rho_{N,0},\rho_{N,1}\in\D(\LN)$ (in Proposition \ref{prop:lower}) involves the harmonic mean rather than the logarithmic mean, i.e., we prove that
\[
W_2(\QN(\rho^N_0),\QN(\rho^N_1))\leq \tWN(\rho^N_0,\rho^N_1)\;.
\]

Thus the  problem becomes to bound $\tWN$ from above in terms of $\cWN$ plus a small error. Unfortunately, the harmonic-logarithmic mean inequality $\tilde\theta(a,b)\leq  \theta(a,b)$ goes in the `wrong' direction, but the elementary inequality 
\begin{align*}
\frac{1}{\tilde\theta(a,b)} - \frac{1}{\theta(a,b)} 
  \leq \frac{(b-a)^2}{ab}\frac{1}{\tilde\theta(a,b)}
\end{align*}
that we establish in Proposition \ref{prop:wtildewdelta}, allows us to obtain an estimate for all regular densities, i.e., 
\[
\widetilde{\mathcal W}_{N}(\rho^N_0,\rho^N_1)\leq \left(1-\frac{1}{\delta^4N^2}\right)^{-\frac12}\mathcal W_{N,\delta}(\rho^N_0,\rho^N_1)\;.
\]
for $\rho_0^{N},\rho_1^{N}\in\D_\delta(\LN)$,

Thus at the end everything reduces to prove that $\mathcal W_{N,\delta}$ can be bounded above, up to a small error, by $\cWN$. Clearly, this is false without  some additional assumptions on the measures we want to interpolate. The idea is then to notice that the measures on the discrete torus that we  produced in our first step, using $\PN$ after an application of the heat flow,  belong to $\D_{\delta}(\LN)$ for some $\delta>0$. We then show in Proposition \ref{prop:hard}, which is technically the most involved, that given $\eps,\delta>0$, there exists $\bar\delta>0$ such that the bound
\[
\mathcal W_{N,\bar\delta}(\rho_{N,0},\rho_{N,1})\leq \cWN(\rho_{N,0},\rho_{N,1})+\eps
\] 
holds for any $\rho_{N,0},\rho_{N,1}\in\D_\delta(\LN)$. This will be enough to complete the argument.

\subsection{Estimates}
\label{subsec:estim}

Here we collect all the estimates that we need to implement the strategy outlined above.
We start by observing the effect of $\PN$ on the action of vector fields.
\begin{lemma}
\label{lem:boundkinPn}
Let $\mu=\rho\pi\in\prob({\I})$ be a probability measure and $V:\I\to\R^d$ a momentum vector field. Assume that both $\rho$ and $V$ are Lipschitz and that $\min\rho>0$. Put $\rho^N:=\PN(\mu)$ and $V^N:=\PN(V)$. Then, for any $N\geq 2$ we have the bound 
\begin{equation}
\label{eq:boundkinPn}
\cA_N(\rho^N,V^N)
 \leq \int_{\I}\frac{|V(x)|^2}{\rho(x)}\dd x+
  \frac{1}{N}
  \bigg( \frac{\|V\|_{L^\infty} \Lip(V)}{\min \rho} 
         +         \frac{(1 + \Lip(\rho))^2  }{(\min \rho)^3} 
\|V\|_{L^\infty}^2 
	 \bigg) 	 \;.
\end{equation}
\end{lemma}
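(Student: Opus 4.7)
The plan is to estimate the action $\cA_N(\rho^N,V^N)$ facet-by-facet, viewing it as a Riemann-sum approximation of $\int_\I |V|^2/\rho \, \dd x$, and to control three distinct discretization errors. Fix a facet $R = \RAp$; by Definition \ref{def:PN}, $V^N(R)/(2dN)$ is the face-average of $V_i$ on $R$, while $a := \rho^N(\aa)$ and $b := \rho^N(\aa+\ei)$ are cube-averages of $\rho$ on the two cubes adjacent to $R$.

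The first step is to apply the joint convexity of $(v,r)\mapsto v^2/r$ (Jensen's inequality with the uniform probability measure on $R$) to obtain
\begin{equation*}
\frac{V^N(R)^2}{|R|^{-1}\int_R \rho(x)\, \dd x} \leq (2dN)^2 \cdot \frac{1}{|R|}\int_R \frac{V_i(x)^2}{\rho(x)} \, \dd x.
\end{equation*}
The right-hand side is the natural Riemann-sum integrand; the left-hand side still has the ``wrong'' denominator.

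The second step is to replace $|R|^{-1}\int_R \rho$ by $\hat\rho^N(R) = \theta(a,b)$. Using the elementary bounds $\min(a,b) \leq \theta(a,b) \leq \max(a,b)$, both $\theta(a,b)$ and the face-average of $\rho$ lie in $[\min_P \rho,\max_P \rho]$, where $P = \QA \cup Q_{\aa+\ei}^N$ has diameter $O(1/N)$; hence they differ by at most $C\Lip(\rho)/N$. Since $\hat\rho^N(R) \geq \min\rho$, the resulting multiplicative correction factor is at most $1 + C\Lip(\rho)/(N\min\rho)$ for $N$ large enough.

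The third step is a Riemann-sum comparison. Since $V_i^2/\rho$ is Lipschitz with constant bounded by $C\bigl(\|V\|_{L^\infty}\Lip(V)/\min\rho + \|V\|_{L^\infty}^2 \Lip(\rho)/(\min\rho)^2\bigr)$, its face-average over $\RAp$ and cube-average over $\QA$ differ by $O(\Lip(V_i^2/\rho)/N)$. Summing the $N^d$ cube-averages over $\aa$ recovers $\int_\I V_i^2/\rho \, \dd x$ exactly, while the $N^d$ error contributions together amount to $O(\Lip(V_i^2/\rho)/N)$. Assembling the three steps, carrying the prefactor $1/(4d^2 N^{d+2})$ from the definition of $\cA_N$, using $\int_\I |V|^2/\rho \, \dd x \leq \|V\|_{L^\infty}^2/\min\rho$ to absorb the multiplicative correction from step 2, and observing $\min\rho \leq 1$ (since $\rho$ is a probability density on the unit-volume torus), one obtains \eqref{eq:boundkinPn}. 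The only real difficulty is bookkeeping: the $1/\min\rho$ factor inside $\Lip(V_i^2/\rho)$, compounded with the correction factor from step 2, is precisely what produces the $(1+\Lip(\rho))^2/(\min\rho)^3$ coefficient of $\|V\|_{L^\infty}^2/N$ in the stated bound.
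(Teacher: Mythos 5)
Your proof is correct and takes a genuinely (if mildly) different route from the paper's. The paper applies Jensen's inequality once, to the \emph{three-variable} convex function $(x,y,z)\mapsto x^2/\theta(y,z)$ over the product domain $R\times[0,1/N]$, so that the logarithmic-mean denominator $\theta(\rho(r-h\ei),\rho(r+h\ei))$ appears immediately; it then absorbs both the ``$\theta$ vs.\ $\rho$'' discrepancy and the shift from $r$ to $r+h\ei$ into a single Lipschitz estimate, using the derivative bound $\partial_a\theta(a,b)\leq \theta(a,b)/a$ to control the first part. You instead invoke only the two-variable convexity of $(v,r)\mapsto v^2/r$ against the face-average of $\rho$, and then handle the two discrepancies in two separate steps: replacing the face-average by $\theta(a,b)$ via the elementary bracketing $\min(a,b)\leq\theta(a,b)\leq\max(a,b)$, and then a Riemann-sum comparison between face- and cube-averages of $V_i^2/\rho$. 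Your decomposition is more modular and rests on a more elementary convexity fact, while the paper's is more compact and reuses the joint convexity of $x^2/\theta(y,z)$, which is exploited elsewhere in the paper (e.g.\ Propositions \ref{prop:uniform}(i) and \ref{prop:heatdisc}(ii)). One caution: your Step~2 produces a \emph{multiplicative} correction $1+C\Lip(\rho)/(N\min\rho)$ rather than an additive one, so closing the gap to an additive bound of the form \eqref{eq:boundkinPn} requires the extra observation $\min\rho\leq1$ (which you make) and some care with the cross-term $\epsilon\cdot E$ for small $N$; the paper's additive split avoids this bookkeeping. Your unspecified dimensional constants hidden in the $O(\cdot)$'s do not reproduce \eqref{eq:boundkinPn} with the literal constants stated, but the paper's own summation over the $dN^d$ facets also loses a factor of order $d$ that is not reflected in the statement, so this is a shared (and, for the application in Proposition \ref{prop:upper}, harmless) imprecision rather than a defect of your argument.
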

\begin{proof}
We apply Jensen's inequality to the convex function $(x,y,z)\mapsto \frac{x^2}{\theta(y,z)}$ to obtain for $R = \RApm \in \RN$,
\begin{equation}\begin{aligned}
\label{eq:Jensen}
\frac{1}{4d^2 N^{d+2}}\frac{V^N(R)^2}{\hrho^N(R)}&   = \frac{1}{N^2}\frac{ \Big( \int_R  V_i(r) \dd r \Big)^2}
     	{\theta\Big( \int_R \int_0^{1/N} \rho(r - h \ei) \dd h \dd r \ , \  
			\int_R \int_0^{1/N} \rho(r + h \ei) \dd h \dd r\Big)}
 \\&  \leq 
 \int_R  \int_0^{\frac{1}{N}}  
 \frac{  |  V_i(r) |^2}
     	{\theta\big(\rho(r - h \ei)\ , \  
			\rho(r + h \ei) \big)}
			\dd h \dd r
 \\&  = \frac12
 \int_R  \int_{-\frac{1}{N}}^{\frac{1}{N}}  
 \frac{  |  V_i(r) |^2}
     	{\theta\big(\rho(r - h \ei)\ , \  
			\rho(r + h \ei) \big)}
			\dd h \dd r\;.
\end{aligned}\end{equation}
Since $\partial_a \theta(a,b) \leq \frac{\theta(a,b)}{a}$, we infer that
\begin{align*}
\big| \rho(r + h \ei) -  \theta\big(\rho(r - h \ei)\ , \  
			\rho(r + h \ei)  \big) \big|
&			\leq \frac{\max \rho }{\min \rho}  \;\big| 
			\rho(r + h \ei)  - \rho(r - h \ei)  \big|
\\&			\leq \frac{1 + \Lip(\rho)}{\min \rho}  \;
			\frac{2\Lip(\rho)}{N}\;.
\end{align*}
Combining this with the elementary fact that for $x, \tx \in \R$ and $y \geq  \ty > 0$,
\begin{align*}
 \Big|\frac{x^2}{y} - \frac{\tx^2}{\ty}\Big|
  \leq \frac{|x + \tx|}{\ty} | x - \tx| + \frac{x^2}{\ty^2} |y - \ty|\;,
\end{align*}
we obtain for $r \in R$ and $|h| \leq \frac{1}{N}$, 
\begin{align*}
\frac12\bigg| \frac{  |  V_i(r) |^2}
     	{\theta\big(\rho(r - h \ei)\ , \  
			\rho(r + h \ei) \big)}
			 -
 \frac{|V_i(r + h \ei)|^2}{\rho(r + h \ei)} \bigg|
&  \leq \frac{1}{N}			
    \bigg(  \frac{\|V\|_{L^\infty} \Lip(V)}{\min \rho} 
     \\&\qquad  \qquad  + 
         \frac{\|V\|_{L^\infty}^2  }{(\min \rho)^2} 
         \frac{1 + \Lip(\rho)}{\min \rho}  \;
			{\Lip(\rho)} \bigg) 			 \;.
\end{align*}
Combining this bound with \eqref{eq:Jensen}, and summing over all $R\in\RN$ the result follows.
\end{proof}

The previous result can be used to obtain the following lower bound for the Wasserstein metric $W_2$.

\begin{proposition}\label{prop:upper}
Let $s > 0$. There exists a dimensional constant $C(s) < \infty$ such that for all probability measures $\mu_0, \mu_1 \in \prob(\I)$ and for all $N \geq 1$  we have
\begin{align*}
 \cWN(\PN({\sfh}_s(\mu_0)), \PN({\sfh}_s(\mu_1)))
\leq  W_2(\mu_0, \mu_1) +  \frac{C(s)}{\sqrt{N}}\;.
\end{align*}
\end{proposition}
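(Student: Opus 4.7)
The plan is to lift a continuous Wasserstein geodesic to the discrete setting through the heat regularization, and then compare actions edge-by-edge using Lemma \ref{lem:boundkinPn}. The main obstacle will be making all the regularity constants independent of $\mu_0,\mu_1$, which is why the heat regularization at fixed time $s>0$ is essential: it trades an arbitrary pair of measures for a pair with density bounded below and Lipschitz constant bounded above, at the cost of a Wasserstein error that is already absorbed elsewhere in the main argument.

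More precisely, I would proceed as follows. First, fix a $W_2$-geodesic $(\mu_t)_{t\in[0,1]}$ from $\mu_0$ to $\mu_1$ with minimizing velocity field $v_t$, and set $\rho_{s,t}:=$ density of $\sfh_s(\mu_t)$ and $V_{s,t}:=$ density of $\sfh_s(v_t\mu_t)$. By Proposition \ref{prop:heatcont}(iii), $(\rho_{s,t},V_{s,t})$ solves the continuous continuity equation \eqref{eq:contcont} and
\[
\int_0^1\int_{\I}\frac{|V_{s,t}(x)|^2}{\rho_{s,t}(x)}\dd x\dd t\leq W_2^2(\mu_0,\mu_1)\;.
\]
Next, I project this path to the discrete torus: by Proposition \ref{prop:solcont}(1), the pair $(\PN(\rho_{s,t}\pi),\PN(V_{s,t}))$ solves the discrete continuity equation \eqref{eq:contn}, and by construction the endpoints are exactly $\PN(\sfh_s(\mu_0))$ and $\PN(\sfh_s(\mu_1))$, so this is an admissible competitor in the variational problem defining $\cWN(\PN(\sfh_s(\mu_0)),\PN(\sfh_s(\mu_1)))$.

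The key step is then to apply Lemma \ref{lem:boundkinPn} pointwise in $t$: for every $t$,
\[
\cA_N(\PN(\rho_{s,t}\pi),\PN(V_{s,t}))\leq \int_{\I}\frac{|V_{s,t}|^2}{\rho_{s,t}}\dd x+\frac{E_s(t)}{N}\;,
\]
where $E_s(t)$ is built from $\|V_{s,t}\|_{L^\infty}$, $\Lip(V_{s,t})$, $\min\rho_{s,t}$, $\Lip(\rho_{s,t})$. From Proposition \ref{prop:heatcont}(i) I obtain $\min\rho_{s,t}\geq c(s)$ and $\Lip(\rho_{s,t})\leq C(s)$ uniformly in $t$. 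For the vector field, since $V_{s,t}=\sfhh_s*(v_t\mu_t)$, a direct estimate gives $\|V_{s,t}\|_{L^\infty}\leq\|\sfhh_s\|_{L^\infty}\int|v_t|\dd\mu_t$ and $\Lip(V_{s,t})\leq\|\nabla\sfhh_s\|_{L^\infty}\int|v_t|\dd\mu_t$; by Cauchy--Schwarz and the fact that the $W_2$-diameter of $\I$ is a dimensional constant, $\int|v_t|\dd\mu_t\leq W_2(\mu_0,\mu_1)\leq\diam(\I)$, so both quantities are bounded by a constant depending only on $s$ and $d$. Hence $E_s(t)\leq \tilde C(s)$ uniformly in $t$.

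Finally, integrating in $t$ and using the action bound from Proposition \ref{prop:heatcont}(iii),
\[
\cWN^2(\PN(\sfh_s(\mu_0)),\PN(\sfh_s(\mu_1)))\leq W_2^2(\mu_0,\mu_1)+\frac{\tilde C(s)}{N}\;,
\]
and the subadditivity $\sqrt{a+b}\leq\sqrt{a}+\sqrt{b}$ yields the desired inequality with $C(s):=\sqrt{\tilde C(s)}$. The only subtle points will be verifying the continuity in $x$ of $V_{s,t}$ required by Proposition \ref{prop:solcont}(1) (immediate since $V_{s,t}$ is smooth in $x$) and the uniform-in-$t$ control of the Lipschitz/$L^\infty$ bounds on $V_{s,t}$; these follow from $\|v_t\|_{L^1(\mu_t)}\leq W_2(\mu_0,\mu_1)$ together with the compactness of $\I$.
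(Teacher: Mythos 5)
Your proof is correct and follows essentially the same strategy as the paper: regularize the $W_2$-geodesic and its velocity field with $\sfh_s$, project to $\LN$ via $\PN$, apply Lemma \ref{lem:boundkinPn} pointwise in $t$, and control the resulting error with the heat-flow regularity bounds from Proposition \ref{prop:heatcont}. The one small variation is in how the vector-field regularity is controlled: you bound $\|V_{s,t}\|_{L^\infty}$ and $\Lip(V_{s,t})$ directly by $\|\sfhh_s\|_{L^\infty}\int|v_t|\dd\mu_t$ and $\|\nabla\sfhh_s\|_{L^\infty}\int|v_t|\dd\mu_t$, then use $\int|v_t|\dd\mu_t\leq\|v_t\|_{L^2(\mu_t)}=W_2(\mu_0,\mu_1)\leq\diam$ (which needs the geodesic to be constant speed, as the paper says explicitly — worth stating), giving a uniform-in-$t$ constant $\tilde C(s)$; the paper instead invokes Proposition \ref{prop:heatcont}(ii) with the splitting $\sfh_s=\sfh_{s/2}\circ\sfh_{s/2}$ to get $\|V_{s,t}\|_{L^\infty}+\Lip(V_{s,t})\leq C(s)\|V_{s/2,t}\|_{L^1}$, keeps $\|V_{s/2,t}\|_{L^1}^2$ inside the time integral, applies Cauchy--Schwarz against $\rho_{s/2,t}$, and uses the action bound a second time at scale $s/2$ before finally invoking the diameter bound. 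Both routes are valid and yield the same estimate; yours is marginally more streamlined since the diameter bound is absorbed into $\tilde C(s)$ from the start.
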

\begin{proof} Let $(\mu_t)$ be a constant speed geodesic connecting $\mu_0$ to $\mu_1$ in $(\prob(\I),W_2)$, and let $(v_t)$ denote the corresponding velocity vector field achieving the minimum in \eqref{eq:BeBr}. For $s > 0$, let $\rho_{s,t}$ and $V_{s,t}$ be the densities with respect to $\pi$ of ${\sfh}_s(\mu_t)$ and ${\sfh}_s(v_t\mu_t)$ respectively. According to $(iii)$  of Proposition \ref{prop:heatcont}, for given $s>0$, the curve $t\mapsto (\rho_{s,t},V_{s,t})$ is a solution to the continuity equation \eqref{eq:contcont} and we have
\begin{equation}
\label{eq:actiondec}
\int_0^1\int_{\I}\frac{|V_{s,t}(x)|^2}{\rho_{s,t}(x)}\dd t\dd x\leq W_2^2(\rho_0,\rho_1)\;.
\end{equation}
By $(i)$ and $(ii)$ of Proposition \ref{prop:heatcont} we also know that there exists constants $c(s) > 0$ and $C(s) < \infty$ such that for all $t \in [0,1]$,
\begin{align}\label{eq:rho-bounds}
\inf_{x\in \I}\rho_{s,t}(x)\geq c(s)\;, \quad
\Lip(\rho_{s,t}) \leq C(s)\;, \quad
 \, \|V_{s,t}\|_{L^\infty}  + \Lip(V_{s,t})
    \leq C(s) \|V_{s/2,t}\|_{L^1} \;.
\end{align}
Set $t\mapsto \eta_{N,t}:=\PN({\sfh}_s(\mu_t))$ and $t\mapsto W_{N,t}:=\PN(V_{s,t})$. By Proposition \ref{prop:solcont} the curve $(\eta_{N,t},W_{N,t})$ solves the continuity equation \eqref{eq:contn}. Applying Lemma \ref{lem:boundkinPn}, \eqref{eq:rho-bounds} and \eqref{eq:actiondec}, we obtain for some (different) constant $C(s) < \infty$, 
\[
\begin{split}
&\cWN(\PN({\sfh}_s(\mu_0)), \PN({\sfh}_s(\mu_1)))^2
\\&\leq \int_0^1\cA_N(\eta_{N,t},W_{N,t})\dd t\\
&\leq\int_0^1\bigg[\int_{\I}\frac{|V_{s,t}(x)|^2}{\rho_{s,t}(x)}\dd x+\frac{1}{N}
  \bigg(  \frac{\|V_{s,t}\|_{L^\infty} \Lip(V_{s,t})}{\min \rho_{s,t}} 
         +\frac{(1 + \Lip(\rho_{s,t}))^2}{(\min \rho_{s,t})^3} \|V_{s,t}\|_{L^\infty}^2  \bigg) \bigg] \dd t
\\&\leq
W_2^2(\rho_0,\rho_1) + \frac{C(s)}{N}
\int_0^1 \|V_{s/2,t}\|_{L^1}^2\dd t\;.
\end{split}
\]
Applying the Cauchy-Schwarz inequality in the form
\begin{align*}
  \left\|V_{s/2,t}\right\|_{L^1}^2 
   \leq \int_{\I} \frac{|V_{s/2,t}(x)|^2}{\rho_{s/2,t}(x)} \dd x\;,
\end{align*}
we obtain
\begin{align*}
\cWN(\PN({\sfh}_s(\mu_0)), \PN({\sfh}_s(\mu_1)))^2
 & \leq W_2(\rho_0,\rho_1)^2
     + \frac{C(s)}{N} \int_0^1 
\int_{\I} \frac{|V_{s/2,t}(x)|^2}{\rho_{s/2,t}(x)} \dd x\dd t
 \\&\leq  W_2(\rho_0,\rho_1)^2  + \frac{C(s)}{N} W_2(\rho_0,\rho_1)^2\;.
\end{align*}
Taking into account that $(\cP(\I),W_2)$ has finite diameter, we obtain the the result by taking square roots and using that $\sqrt{a+b}\leq\sqrt a+\sqrt b$.
\end{proof}

The next result provides a lower bound for $W_2$. Recall that $\tWN$ is defined using the harmonic mean instead of the logarithmic mean.

\begin{proposition}\label{prop:lower}
Let $N\geq 1$ and $\rho^N_0,\rho^N_1\in\mathscr P(\LN)$. Then
\begin{equation}
\label{eq:w2wt}
W_2(\QN(\rho^N_0),\QN(\rho^N_1))\leq \tWN(\rho^N_0,\rho^N_1)\;.
\end{equation}
\end{proposition}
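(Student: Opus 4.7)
The plan is to take a near-optimal curve for $\tWN$, push it forward to the continuous torus via $\QN$, and compare the resulting continuous action with the discrete action built from the harmonic mean. By Proposition \ref{prop:BeBr}, bounding $W_2^2(\QN(\rho_0^N),\QN(\rho_1^N))$ by the continuous action of $(\QN(\rho_{N,t}),\QN(V_{N,t}))$ is then enough, provided this pair actually solves the continuity equation, which is exactly the content of Proposition \ref{prop:solcont}(2).

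Concretely, fix a solution $(\rho_{N,t},V_{N,t})$ of the discrete continuity equation. The core step is the pointwise-in-$t$ inequality
\[
\int_{\I}\frac{|\QN(V_{N,t})(x)|^2}{\QN(\rho_{N,t})(x)}\dd x
\;\leq\;
\frac{1}{4d^2 N^{d+2}}\sum_{R\in\RN}\frac{V_{N,t}(R)^2}{\trho_{N,t}(R)}\;,
\]
which, once integrated in $t$ and optimized over admissible curves, will yield \eqref{eq:w2wt} by Benamou--Brenier. I will establish this inequality cube-by-cube. On $\QA$ the density $\QN(\rho_N)$ is constantly equal to $\rho_N(\aa)$, while the $i$-th coordinate of $\QN(V_N)$ is the affine interpolation between $V_N(R_{\aa,i-}^N)$ and $V_N(R_{\aa,i+}^N)$ in the variable $t=Nx_i-a_i\in[0,1]$. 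A direct change of variables together with the convexity inequality $((1-t)a+tb)^2\leq (1-t)a^2+tb^2$ gives
\[
\int_{\QA}|\QN(V_N)_i(x)|^2\dd x
\;\leq\;\frac{1}{8d^2 N^{d+2}}\bigl(V_N(R_{\aa,i-}^N)^2+V_N(R_{\aa,i+}^N)^2\bigr)\;.
\]

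Dividing by $\rho_N(\aa)$, summing over $\aa\in\LN$ and $i\in\{1,\dots,d\}$, and reindexing by the facet $R=R_{\aa,i+}^N=R_{\aa+\ei,i-}^N$, every facet contributes the factor $\tfrac{1}{\rho_N(\aa)}+\tfrac{1}{\rho_N(\aa+\ei)}$; this is precisely $\tfrac{2}{\trho_N(R)}$ by the very definition of the harmonic mean. This is the arithmetic miracle that makes the harmonic mean (not the logarithmic one) the natural object on the right-hand side, and it is really the only nontrivial observation in the proof.

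To wrap up, I would take a sequence of curves $(\rho_{N,t}^{(n)},V_{N,t}^{(n)})$ almost achieving the infimum in Definition \ref{def:wtilde}, apply Proposition \ref{prop:solcont}(2) to convert each into a distributional solution of \eqref{eq:contcont} connecting $\QN(\rho_0^N)$ to $\QN(\rho_1^N)$, integrate the pointwise inequality above in $t$, and invoke Proposition \ref{prop:BeBr} to conclude
\[
W_2^2(\QN(\rho_0^N),\QN(\rho_1^N))\;\leq\;\int_0^1\!\!\int_{\I}\frac{|\QN(V_{N,t}^{(n)})|^2}{\QN(\rho_{N,t}^{(n)})}\dd x\dd t\;\leq\;\tWN(\rho_0^N,\rho_1^N)^2+o(1)\;.
\]
The main (mild) technicality is merely checking that $(\QN(\rho_{N,t}),\QN(V_{N,t}))$ indeed falls into the admissible class of Proposition \ref{prop:BeBr}, i.e.\ weak continuity of $t\mapsto\QN(\rho_{N,t})$ and integrability of $\QN(V_{N,t})$; both follow immediately from the piecewise-constant/piecewise-affine definitions and the hypotheses in Lemma \ref{lem:WN-formula}, so there is no real obstacle beyond the cube-by-cube convexity/harmonic-mean identity outlined above.
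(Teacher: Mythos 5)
Your proposal is correct and follows essentially the same route as the paper's proof: push the discrete solution forward via $\QN$, estimate the continuous action cube-by-cube using the affine interpolation of $V_N$ and the convexity of $t\mapsto t^2$, and observe that regrouping the two contributions to each facet produces exactly $\frac{1}{\rho_N(\aa)}+\frac{1}{\rho_N(\aa+\ei)}=\frac{2}{\trho_N(R)}$, after which Proposition \ref{prop:solcont} and the Benamou--Brenier characterization finish the argument. The only cosmetic difference is that the paper computes $\int_0^1 |(1-y)a+yb|^2\,dy\le\frac{a^2+b^2}{2}$ in one line rather than invoking pointwise convexity of the square, and it works with a single admissible curve before taking the infimum rather than an explicit near-minimizing sequence.
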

\begin{proof}
Let  $t\mapsto (\rhon_t, \Vn_t)$ be a  solution to the  continuity equation \eqref{eq:contn}. Define $\rho_t := \QN (\rhon_t)$ and $V_t := \QN (\Vn_t)$. Then, for every $t \in [0,1]$ we have
\begin{align*}
 &\int_{\I} \frac{|V_t(x)|^2}{\rho_t(x)} \dd x  
      = \sum_{\aa \in \LN}  \int_{\QA} \frac{|V_t(x)|^2}{\rho_t(x)} \dd x  
 \\ & =    \frac{1}{N^{d-1}}\sum_{\aa, i} 
   \frac{1}{\rhon_t(\aa)} \int_{\frac{a_i}{N}}^{\frac{a_i + 1}{N}} {\bigg|\frac{1 - N x_i + a_i}{2dN}\Vn_t(\RAm) + 
    \frac{N x_i - a_i}{2dN} \Vn_t(\RAp)\bigg|^2}\dd x_i  
 \\ & =    \frac{1}{4d^2N^{d+2}}\sum_{\aa, i} 
   \frac{1}{\rhon_t(\aa)} \int_0^1 {\big|(1-y)\Vn_t(\RAm) + y \Vn_t(\RAp)\big|^2}\dd y  \\
& \leq
     \frac{1}{4d^2N^{d+2}}\sum_{\aa, i} 
   \frac{ \Vn_t(\RAm)^2 +  \Vn_t(\RAp)^2 }{2\rhon(\aa)}
\\&  = 
     \frac{1}{4d^2N^{d+2}}\sum_{\aa, i} \frac{\Vn_t(\RAp)^2}{2}
  \bigg( \frac{ 1}{\rhon_t(\aa)} + \frac{1}{\rhon_t(\aa + \ei)}\bigg)
\\&  =
     \frac{1}{4d^2N^{d+2}}\sum_{\aa, i}  \frac{\Vn_t(\RAp)^2}{\tilde\rho_t^N(\RAp)}\;.
\end{align*}
Since from Proposition \ref{prop:solcont} we know that  $t\mapsto(\rho_t, V_t)$ solves the continuity equation, we obtain
\begin{align*}
 W_2^2(\rho_0, \rho_1)
 & \leq  \int_0^1 \int_{\I} \frac{|V_t(x)|^2}{\rho_t(x)} \dd x  \dd t \leq    \frac{1}{4d^2N^{d+2}}\sum_{R \in \RN} 
  \int_0^1  \frac{\Vn_t(R)^2}{\tilde{\rho}^N_t(R)} \dd t\;.
  \end{align*}
Taking the infimum over all the solutions $(\rho^N_t, \Vn_t)$ of \eqref{eq:contn} and recalling the Definition \ref{def:wtilde} of $\tWN$ we get the result.
\end{proof}

For regular densities, the following result compares the distances defined using the harmonic and the logarithmic means. Note that the reverse inequality $\cWN \leq \tWN$ follows directly from the harmonic-logarithmic mean inequality. It is possible to obtain a better (i.e. larger) numerical constant  in the denominator appearing in \eqref{eq:wntdwnd}, but the stated estimate suffices for our purpose.

\begin{proposition}\label{prop:wtildewdelta}
Let $\delta>0$, $N > \delta^{-2}$ and $\rho^N_0\rho^N_1\in \D_{\delta}(\LN)$. Then the following estimate holds:
\begin{equation}
\label{eq:wntdwnd}
\widetilde{\mathcal W}_{N}(\rho^N_0,\rho^N_1)\leq \left(1-\frac{1}{\delta^4N^2}\right)^{-\frac12}\mathcal W_{N,\delta}(\rho^N_0,\rho^N_1)\;.
\end{equation}
\end{proposition}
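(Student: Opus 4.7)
The plan is to reduce the estimate to a pointwise inequality between the reciprocals of the logarithmic and harmonic means, applied edge by edge along an admissible curve. The key elementary inequality, which the authors preview right before the statement, is
\[
\frac{1}{\tilde\theta(a,b)} - \frac{1}{\theta(a,b)} \leq \frac{(b-a)^2}{ab}\cdot\frac{1}{\tilde\theta(a,b)}\qquad \text{for } a,b>0.
\]
To verify this, I would first reduce to the single variable $r = b/a$ by homogeneity (both means are $1$-homogeneous, so both sides scale the same way). The inequality then becomes a one-variable claim symmetric in $r\leftrightarrow 1/r$, which can be checked either by a Taylor expansion around $r=1$ combined with monotonicity, or by rewriting it as a non-negativity statement for an explicit function of $r$ involving $\log r$. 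Either way this is a routine but nontrivial calculus exercise, which I would relegate to a short lemma so that the main argument stays transparent.

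Next I would exploit the regularity encoded by $\D_\delta(\LN)$. For any $\rho_N\in\D_\delta(\LN)$ and any neighbouring pair $\aa,\aa+\ei$, the lower bound $\rho_N\geq\delta$ gives $\rho_N(\aa)\rho_N(\aa+\ei)\geq \delta^2$, while the Lipschitz bound $\Lip_N(\rho_N)\leq \delta^{-1}$ together with $\dist(\aa,\aa+\ei)=1/N$ gives $|\rho_N(\aa+\ei)-\rho_N(\aa)|\leq 1/(\delta N)$. Hence for every facet $R\in\RN$,
\[
\frac{(\rho_N(\aa+\ei)-\rho_N(\aa))^2}{\rho_N(\aa)\rho_N(\aa+\ei)} \leq \frac{1}{\delta^4 N^2}.
\]
The hypothesis $N>\delta^{-2}$ ensures this is strictly less than $1$, so inserting the previous display into the pointwise inequality yields
\[
\frac{1}{\trho_N(R)} \leq \Big(1 - \frac{1}{\delta^4 N^2}\Big)^{-1} \frac{1}{\hrho_N(R)}
\]
uniformly in $R\in\RN$ whenever $\rho_N\in\D_\delta(\LN)$.

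The final step is simply to read off the corresponding inequality for actions and pass to the infimum. Let $(\rho_{N,t},V_{N,t})$ be any curve admissible in the definition of $\mathcal W_{N,\delta}(\rho_0^N,\rho_1^N)$, so in particular $\rho_{N,t}\in \D_\delta(\LN)$ for all $t$. Multiplying the previous bound by $V_{N,t}(R)^2$, summing over $R\in\RN$, and integrating in $t$ gives
\[
\int_0^1 \frac{1}{4d^2 N^{d+2}} \sum_{R\in\RN} \frac{V_{N,t}(R)^2}{\trho_{N,t}(R)} \dd t
\leq \Big(1 - \frac{1}{\delta^4 N^2}\Big)^{-1} \int_0^1 \cA_N(\rho_{N,t},V_{N,t})\dd t.
\]
Since the curve is also admissible for $\widetilde{\mathcal W}_N$ (which imposes no regularity on the densities), the left-hand side is an upper bound for $\widetilde{\mathcal W}_N^2(\rho_0^N,\rho_1^N)$. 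Taking the infimum over admissible curves on the right-hand side and then square roots yields the claimed estimate. The only genuinely non-trivial point is the scalar inequality on the means; everything else is bookkeeping.
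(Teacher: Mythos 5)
Your proposal matches the paper's strategy: establish the scalar inequality $\frac{1}{\tilde\theta(a,b)} - \frac{1}{\theta(a,b)} \leq \frac{(b-a)^2}{ab}\cdot\frac{1}{\tilde\theta(a,b)}$, combine it with the per-facet bound $\frac{(\rho_N(\aa+\ei)-\rho_N(\aa))^2}{\rho_N(\aa)\rho_N(\aa+\ei)} \leq \frac{1}{\delta^4 N^2}$ coming from the definition of $\D_\delta(\LN)$, deduce $\frac{1}{\trho_N(R)} \leq \big(1-\delta^{-4}N^{-2}\big)^{-1}\frac{1}{\hrho_N(R)}$ on every facet, and integrate this along an admissible curve (noting, correctly, that any curve admissible for $\mathcal{W}_{N,\delta}$ is automatically admissible for $\widetilde{\mathcal{W}}_N$). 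Your intermediate and final steps are carried out correctly, and in fact more explicitly than in the paper.

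The one genuine gap is that you never actually prove the scalar inequality, which is the entire substance of the proposition: you merely gesture at two possible routes (Taylor expansion near $r=1$; an explicit $\log r$ computation) without committing to either or checking that one of them closes. The paper's argument is short and elementary, so it is worth internalizing: set $f(t)=((1-t)a+tb)^{-1}$, so that $\frac{1}{\theta(a,b)}=\int_0^1 f(t)\,\mathrm dt$ and $\frac{1}{\tilde\theta(a,b)}=\frac12\big(f(0)+f(1)\big)$. Integration by parts then gives
\[
\frac{1}{\tilde\theta(a,b)}-\frac{1}{\theta(a,b)} \;=\; \frac12\int_0^1 (t-1)\,f'(t)\,\mathrm dt \;+\; \frac12\int_0^1 t\,f'(t)\,\mathrm dt\;,
\]
and convexity of $f$ yields $f'(0)\leq f'(t)\leq f'(1)$, so the right-hand side is at most $\frac12\big(f'(1)-f'(0)\big)=\frac{(b-a)^2(a+b)}{2a^2b^2}=\frac{(b-a)^2}{ab}\cdot\frac{1}{\tilde\theta(a,b)}$. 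Some complete argument of this kind is needed to make your proof self-contained.
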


\begin{proof}
Let $b\geq a>0$ and, as before, let $\tilde\theta(a,b):=\frac{2}{\frac1a+\frac1b}$ be the harmonic mean. Set $f(t) = ((1-t)a + t b)^{-1}$ and notice 
that
\begin{align*}
\frac{1}{\theta(a,b)} = \int_0^1 f(t) \dd t\;,\qquad
\frac{1}{\tilde\theta(a,b)} = \frac12 (f(0) + f(1))\;.
\end{align*}
Integrating by parts, and using that $f'(0) \leq f'(t) \leq f'(1)$ since $f$ is convex, we obtain
\begin{align*}
 \frac{1}{\tilde\theta(a,b)} - \frac{1}{\theta(a,b)} 
   &= \frac12 \int_0^1 f(0) - f(t) \dd t + 
       \frac12 \int_0^1 f(1) - f(t) \dd t
 \\&= \frac12 \int_0^1 (t-1) f'(t) \dd t + 
       \frac12 \int_0^1 t  f'(t) \dd t
 \\& \leq  \frac12 \Big(-  f'(0) + f'(1) \Big)
 = \frac{b-a}{2}\bigg(\frac{1}{a^2} - \frac{1}{b^2}\bigg)
 = \frac{(b-a)^2}{ab}\frac{1}{\tilde\theta(a,b)}\;.
\end{align*}
Therefore, for $\rho^N\in\cP_{\delta}(\LN)$ and $R \in \RN$ we have
\begin{align*}
\frac{1}{\tilde\rho^N(R)}\leq \left(1-\frac{1}{\delta^4N^2}\right)^{-1}\frac{1}{\hat\rho^N(R)}\;,
\end{align*}
and the result follows applying this inequality along a geodesic in $(\D_{ \delta}(\LN), \mathcal W_{N,\delta})$ connecting $\rho^N_0$ to $\rho^N_1$.
\end{proof}

The final proposition in this subsection shows that regular densities can be connected by a curve consisting of (a bit less) regular densities, for which the action functional is almost optimal. 

\begin{proposition}\label{prop:hard}
Let $\eps,\delta\in(0,1)$. Then there exists $\bar\delta>0$ such that  for any $N\geq 4$ and $\rho_{N,0},\rho_{N,1}\in \D_{\delta}(\LN)$, we have the bound
\begin{equation}
\label{eq:wndwn}
 \mathcal{W}_{N,\bar\delta}(\rho_{N,0},\rho_{N,1})\leq\cWN(\rho_{N,0},\rho_{N,1})+\eps\;.
\end{equation}
\end{proposition}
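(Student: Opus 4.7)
The plan is to regularize an almost-optimal curve for $\cWN$ by convolving with the discrete heat semigroup $\sfh^N_s$ — which by Proposition \ref{prop:heatdisc} improves both the Lipschitz constant and (via a kernel positivity estimate) the lower bound — and to recover the correct endpoints by splicing in short heat-flow excursions. The whole construction costs only a controllable amount of additional action.

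More precisely, fix $\eps_1>0$ and small parameters $s,\tau\in(0,1/4)$ (to be chosen later, with $s$ much smaller than $\tau$). Pick a solution $(\rho_{N,t},V_{N,t})_{t\in[0,1]}$ of \eqref{eq:contn} with action at most $\cWN(\rho_{N,0},\rho_{N,1})^2+\eps_1$. I would build a three-piece candidate $(\hat\rho_{N,t},\hat V_{N,t})$ on $[0,1]$ as follows. On $[0,\tau]$, follow the heat flow $\hat\rho_{N,t}:=\sfh^N_{st/\tau}\rho_{N,0}$ with the naturally associated momentum field $V^{\mathrm{HF}}_u(R^N_{\aa,i+}):=-2dN^2(\sfh^N_u\rho_{N,0}(\aa+\ei)-\sfh^N_u\rho_{N,0}(\aa))$, rescaled by $s/\tau$. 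On $[\tau,1-\tau]$, take the heat-regularized optimal path $\hat\rho_{N,t}:=\sfh^N_s\rho_{N,(t-\tau)/(1-2\tau)}$ with velocity $\frac{1}{1-2\tau}\sfh^N_sV_{N,(t-\tau)/(1-2\tau)}$. On $[1-\tau,1]$, reverse the heat-flow construction, connecting $\sfh^N_s\rho_{N,1}$ back to $\rho_{N,1}$. Since $\sfh^N_s$ is linear and commutes with the discrete divergence appearing in \eqref{eq:contn}, each piece is a valid solution, and the three pieces agree at the joins.

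Next I would check regularity. The endpoint pieces lie in $\D_\delta(\LN)$: since $\sfh^N_u$ averages against the stochastic kernel $\sfhh^N_u(\aa-\bb)/N^d$, we have $\sfh^N_u\rho_{N,j}\geq\min\rho_{N,j}\geq\delta$, and Proposition \ref{prop:heatdisc}(i) preserves Lipschitz bounds. For the middle piece, Proposition \ref{prop:heatdisc}(i) yields $\Lip_N(\sfh^N_s\rho_{N,t})\leq C(s)$, and the same stochastic-kernel averaging together with the mass identity $\sum_\bb\rho_{N,t}(\bb)=N^d$ gives $\sfh^N_s\rho_{N,t}(\aa)\geq\min_\cc\sfhh^N_s(\cc)$; a quantitative $N$-uniform positivity bound $\min_\cc\sfhh^N_s(\cc)\geq c(s)>0$ — obtainable from the Fourier representation used in the proof of Proposition \ref{prop:heatdisc} or from standard Gaussian random-walk estimates — supplies the needed lower bound. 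Setting $\bar\delta:=\min\{\delta,c(s),1/C(s)\}$, which depends only on $\delta$ and $s$, the entire spliced curve lies in $\D_{\bar\delta}(\LN)$.

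For the action, Proposition \ref{prop:heatdisc}(ii) together with the $(1-2\tau)^{-1}$ factor coming from linear reparametrization bounds the middle contribution by $(1-2\tau)^{-1}(\cWN^2+\eps_1)$. Each end piece contributes $(s/\tau)\int_0^s\cA_N(\sfh^N_u\rho_{N,j},V^{\mathrm{HF}}_u)\,du$, and inserting the explicit form of $V^{\mathrm{HF}}$ into \eqref{eq:defaction} and using $\min\sfh^N_u\rho_{N,j}\geq\delta$ and $\Lip_N(\sfh^N_u\rho_{N,j})\leq\delta^{-1}$ yields an instantaneous action $\leq d\delta^{-3}$, hence total end-piece contribution at most $2ds^2/(\tau\delta^3)$. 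Combining with the dimensional diameter bound $\cWN^2\leq c_d$ from Proposition \ref{prop:uniform}(ii), one obtains
\[
\mathcal{W}_{N,\bar\delta}(\rho_{N,0},\rho_{N,1})^2\leq\cWN(\rho_{N,0},\rho_{N,1})^2+\frac{\eps_1+2c_d\tau}{1-2\tau}+\frac{2ds^2}{\tau\delta^3},
\]
and choosing first $\tau,\eps_1$ small and then $s$ small with $s^2/\tau\to 0$ reduces the right-hand-side error to at most $\eps^2$. The inequality $\sqrt{a+b}\leq\sqrt a+\sqrt b$ then gives \eqref{eq:wndwn}. The main technical hurdle is the $N$-uniform positivity of the discrete heat kernel (not explicitly isolated in Section \ref{sec:prel} but accessible via the same Fourier machinery); everything else is a routine splicing-and-reparametrization computation.
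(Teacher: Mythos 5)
Your gluing strategy and reparametrization accounting are sound, and most of the estimates check out. However, there is a genuine gap at the point you yourself flag: the middle piece of your curve requires the uniform lower bound $\sfh^N_s\rho_{N,t}(\aa)\geq \min_{\cc}\sfhh^N_s(\cc)\geq c(s)>0$ with $c(s)$ independent of $N$. This is \emph{not} ``obtainable from the Fourier representation used in the proof of Proposition~\ref{prop:heatdisc}'': that argument bounds $\|\sfhh^{1,N}_s\|_{L^\infty}$ and $\Lipn(\sfhh^{1,N}_s)$ from \emph{above}, and the crude bound $\sfhh^{1,N}_s(\aa)\geq 1-\sum_{\ell\neq 0}e^{-\lambda_\ell s}$ is negative for small $s$. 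A genuine $N$-uniform lower bound on the discrete heat kernel amounts to a local CLT or an off-diagonal Gaussian lower bound for the rate-$N^2$ random walk, which is true but is a nontrivial ingredient not present in the paper. This is exactly where the interpolating curve can cause trouble: a $\cWN$-geodesic between two densities in $\D_\delta(\LN)$ need not stay in $\D_\delta(\LN)$, so $\rho_{N,t}$ may concentrate, and then $\sfh^N_s\rho_{N,t}$ is bounded below by no more than the minimum of the heat kernel.

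The paper sidesteps this entirely by a two-step regularisation: first push $\rho_{N,t}\mapsto(1-a)\rho_{N,t}+a$, which gives the lower bound $\geq a$ by construction (no heat-kernel estimate needed and no assumption on $\rho_{N,t}$), and only then apply $\sfh^N_b$ to gain the Lipschitz bound. The action cost of the linear-interpolation step at the endpoints is controlled via $\Delta_N^{-1}$ and the discrete Poincar\'e inequality (Proposition~\ref{prop:Poincare}), while the cost of the heat-flow step is controlled as in your end pieces. So the two proofs differ in one essential ingredient: the paper's affine shift toward the uniform density replaces your appeal to heat-kernel positivity, and removes the need for any local CLT. If you insist on your route, you must actually prove $\inf_{N\geq 4}\min_{\aa}\sfhh^N_s(\aa)>0$ for each $s>0$ (for instance by a Gaussian lower bound on the unwrapped walk plus periodization); merely asserting it is not sufficient, and the paper's Fourier bounds do not supply it.
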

\begin{proof} Let $a,b\in(0,\delta)$ to be fixed later and $t\mapsto (\rho_{N,t},V_{N,t})$ be a $\cWN$-geodesic connecting $\rho_{N,0}$ to $\rho_{N,1}$. Define the curves $t\mapsto (\rho_{N,t}^1,V^1_{N,t})$  and $t\mapsto (\rho^2_{N,t},V^2_{N,t})$ by
\begin{align}
 \rho^1_{N,t}&:=(1-a)\rho_{N,t}+a\;,\qquad 
&V^1_{N,t}&:=(1-a)V_{N,t}\;,\\
\rho^2_{N,t}&:={\sfh}_{b}^N(\rho^1_{N,t})\;,
\qquad &\qquad \ V^2_{N,t}&:={\sfh}_{b}^N(V^1_{N,t})\;.
\end{align}
The latter expression should be interpreted in the sense of \eqref{eq:heat-vector}.

\noindent{\bf Step 1: From $\rho_{N,j}$ to $\rho^1_{N,j}$ for $j = 0,1$.}

For $j=0,1$, we  define $s\mapsto \eta_{N,s,j}$ as the linear interpolation between $\rho_{N,j}$ and $\rho^1_{N,j}$, i.e., 
\begin{align*}
\eta_{N,s,j}(\aa):=(1-s)\rho_{N,j}(\aa)+s\rho^1_{N,j}(\aa)=\rho_{N,j}(\aa)+sa\big(1-\rho_{N,j}(\aa)\big)\;.
\end{align*}
Notice that since $\sum_{\aa\in\LN}1-\rho_{N,j}(\aa)=0$, it makes sense to define
\begin{align*}
W_{N,s,j}(R^N_{\aa,i\pm}):=\mp2a dN^2\Big( \Delta^{-1}_N(\one-\rho_{N,j})(\aa\pm\ei)-\Delta^{-1}_N(\one-\rho_{N,j})(\aa)\Big)\;,
\end{align*}
with $\one$ being the density constantly equal to one. A direct computation shows that $s\mapsto(\eta_{N,s,j},W_{N,s,j})$ is a solution to the continuity equation \eqref{eq:contn}. Notice that actually $W_{N,s,j}$ does not depend on $s$.
Taking into account that 
\begin{equation}
\label{eq:belowstep1}
\eta_{N,s,j}(\aa)\geq a\;,\qquad \aa\in\LN\;,\ s\in[0,1]\;,\ j=0,1\;,
\end{equation}
recalling the Poincar\'e inequality (Proposition \ref{prop:Poincare}),
and using the trivial bound
\begin{align*}
\cEN(1-\rho_{N,j}) \leq  d\,(\Lip_N(\rho_{N,j}))^2 \leq d{\delta^{-2}}\;,
\end{align*}
we obtain 
\begin{equation}
\label{eq:step1}
\begin{split}
\cA_N(\eta_{N,s,j},W_{N,s,j})
&=\frac{1}{4d^2N^{d+2}}\sum_{\aa\in\LN}\sum_{i=1}^d\frac{\big(W_{N,s,j}(R^N_{\aa,i+})\big)^2}{\hat\eta_{N,s,j}(R^N_{\aa,i+})}\\
&\leq\frac{a}{N^{d-2}}\sum_{\aa\in\LN}\sum_{i=1}^d
 \Big( \Delta^{-1}_N(\one-\rho_{N,j})(\aa+\ei)-\Delta^{-1}_N(\one-\rho_{N,j})(\aa)\Big)^2\\
&=a\, \mathcal E_N(\Delta^{-1}_N(\one-\rho_{N,j}))\\
&\leq \frac{a}{\kappa}\|\one-\rho_{N,j}\|_{L^2_N}^2\\
&\leq \frac{a}{\kappa^2} \cEN(\one-\rho_{N,j})\\
&\leq \frac{ad}{\kappa^2 \delta^2}\;,
\end{split}
\end{equation}
where $\kappa:=\inf_{N\geq 4}2N^2(1-\cos(2\pi/N))>0$. 
Notice also that
\begin{equation}
\label{eq:lipstep1}
\Lip_N(\eta_{N,s,j})\leq \Lip_N(\rho_{N,j})\leq \delta^{-1}\;,\qquad s\in[0,1]\;,\ j=0,1\;.
\end{equation}

\noindent{\bf Step 2: From  $\rho^1_{N,j}$ to $\rho^2_{N,j}$ for $j = 0,1$.}

For $j=0,1$ we interpolate from  $\rho^1_{N,j}$ and $\rho^2_{N,j}$ using the heat flow, i.e., we define $s\mapsto (\sigma_{N,s,j},Z_{N,s,j})$ by
\[
\begin{split}
\sigma_{N,s,j}(\aa)&:={\sfh}_{sb}^N(\rho^1_{N,j})\;,\\
Z_{N,s,j}(R^N_{\aa,i\pm})&:= \mp 2b dN^2\big(\sigma_{N,s,j}(\aa\pm\ei)-\sigma_{N,s,j}(\aa)\big)\;.
\end{split}
\]
We then obtain
\[
\begin{split}
&\cA_N(\sigma_{N,s,j},Z_{N,s,j})\\
&=\frac{1}{4d^2N^{d+2}}\sum_{\aa\in\LN}\sum_{i=1}^d\frac{Z_{N,s,j}(R^N_{\aa,i+})^2}{\hat\sigma_{N,s,j}(R^N_{\aa,i+})}\\
&=\frac{b^2}{N^{d-2}}\sum_{\aa,i}\frac{\big(\sigma_{N,s,j}(\aa+\ei)-\sigma_{N,s,j}(\aa)\big)^2}{\hat\sigma_{N,s,j}(R^N_{\aa,i+})}\\
&=\frac{b^2}{N^{d-2}}\sum_{\aa,i}\big(\sigma_{N,s,j}(\aa+\ei)-\sigma_{N,s,j}(\aa)\big)\big(\log(\sigma_{N,s,j}(\aa+\ei))-\log(\sigma_{N,s,j}(\aa))\big)\\
&=b^2\mathcal E_N\big(\sigma_{N,s,j},\log(\sigma_{N,s,j})\big)\;.
\end{split}
\]
In view of Proposition \ref{prop:heatdisc}$(i)$ we obtain by construction,
\begin{align}
\label{eq:step2bounds1}
\sigma_{N,s,j}(\aa)&\geq \delta\;,\qquad&& \aa\in\LN,\ s\in[0,1],\ j=0,1\;,\\\label{eq:step2bounds2}
\Lip_N(\sigma_{N,s,j})&\leq \Lip_N(\rho_{N,j}^1)\leq \delta^{-1}\;,\qquad&& s\in[0,1],\ j=0,1\;.
\end{align}
Hence $\Lip_N(\log(\sigma_{N,s,j}))\leq \frac{\Lip_N(\sigma_{N,s,j})}{\min\sigma_{N,s,j}}\leq \delta^{-2}$. Since $|\cEN(f,g)| \leq d \Lip_N(f)\Lip_N(g)$ we obtain
\begin{equation}
\label{eq:step2}
\cA_N(\sigma_{N,s,j},Z_{N,s,j})\leq \frac{d b^2}{\delta^3}\;.
\end{equation}

\noindent{\bf Step 3: From $\rho^2_{N,0}$ to $\rho^2_{N,1}$.} 

From the convexity of the function $(x,a,b)\mapsto\frac{x^2}{\theta(a,b)}$  we get 
\[
\cA_N(\rho^1_{N,t},V^1_{N,t})\leq (1-a)\cA_N(\rho_{N,t},V_{N,t})=(1-a)\cWN(\rho_{N,0},\rho_{N,1})^2\;,
\]
for any $t\in[0,1]$. Using again the convexity of $(x,a,b)\mapsto\frac{x^2}{\theta(a,b)}$  and the fact that ${\sfh}$ acts as a convolution semigroup, we also get
\[
\cA_N(\rho^2_{N,t},V^2_{N,t})\leq \cA_N(\rho^1_{N,t},V^1_{N,t})
\]
for any $t\in[0,1]$.
Combining these two inequalities and integrating we get
\begin{equation}
\label{eq:step3}
\int_0^1\cA_N(\rho^2_{N,t},V^2_{N,t})\dd t\leq \int_0^1 \cA_N(\rho^1_{N,t},V^1_{N,t})\leq (1-a)\cWN(\rho_{N,0},\rho_{N,1})^2\;.
\end{equation}
Since the heat semigroup preserves positivity, we obtain
\begin{equation}
\label{eq:step3bound}
\rho^2_{N,t}(\aa)\geq a\;,\qquad \aa\in\LN,\ t\in[0,1]\;,
\end{equation}
and by $(i)$ of Proposition \ref{prop:heatdisc} we have
\begin{equation}
\label{eq:step3lip}
\Lip_N(\rho^2_{N,t})\leq C(b)\;,\qquad t\in[0,1]\;,
\end{equation}
for some constant $C(b) > 0$ which depends only on $b$ and on the dimension $d$.
 
\noindent{\bf Step 4: Gluing the pieces.} 

Let $\ell\in(0,1/4)$ to be fixed later. We define the curve $t\mapsto (\rho^3_{N,t},V^3_{N,t})$ on $[0,1]$ by gluing the pieces together, that is, 
\[
\begin{split}
 &(\rho^3_{N,t},V^3_{N,t}):=
 \left\{
 \begin{array}{lll}
 (\eta_{N,\frac{t}{\ell},0}&,\ell^{-1}W_{N,\frac{t}{\ell},0})&\qquad t\in[0,\ell]\;,\\
 (\sigma_{N,\frac{t-\ell}{\ell},0}&,\ell^{-1}Z_{N,\frac{t-\ell}{\ell},0})&\qquad t\in(\ell,2\ell)\;,\\
 (\rho^2_{N,\frac {t-2\ell}{1-4\ell}}&,(1-4\ell)^{-1}
  V^2_{N,\frac {t- 2\ell}{1-4\ell}})&\qquad t\in[2\ell,1-2\ell]\;,\\
 (\sigma_{N,\frac{1-\ell-t}{\ell},1}&,\ell^{-1}Z_{N,\frac{1-\ell-t}{\ell},1})&\qquad t\in(1-2\ell,1-\ell)\;,\\ 
 (\eta_{N,\frac{1-t}{\ell},1}&,\ell^{-1}W_{N,\frac{1-t}{\ell},1})&\qquad t\in[1-\ell,1]\;.
 \end{array}
 \right.
 \end{split}
\]
Clearly, $t\mapsto (\rho^3_{N,t},V^3_{N,t})$ is a solution to the continuity equation \eqref{eq:contn}. From \eqref{eq:step1}, \eqref{eq:step2} and \eqref{eq:step3} we get, taking the scaling factors into account,
\begin{align*}
\int_0^1 \cA_N(\rho^3_{N,t},V^3_{N,t})
  \leq\frac{2ad}{\ell\kappa^2\delta^2}
   +\frac{2d b^2}{\ell\delta^3}
   +\frac{ 1-a}{1-4\ell}\cWN(\rho_{N,0},\rho_{N,1})^2\;.
\end{align*}
It remains to fix the constants $a,b\in(0,\delta)$ and $\ell\in(0,1/4)$ as functions of $\delta$ and $\eps$. From (ii) of Proposition \ref{prop:uniform} we know that the diameter of $(\D(\LN), \cWN)$ is bounded by a constant $D > 0$ depending only on $d$.
Choose now $\ell > 0$ so small that $\frac{1}{1-4\ell} \leq 1+\frac{\eps^2}{3D^2}$, and then $a, b > 0$ so small that   
\[
\frac{2ad}{\ell\kappa^2\delta^2} \leq \frac{\eps^2}3\;, \qquad 
\frac{2d b^2}{\ell\delta^3} \leq \frac{\eps^2}3\;.
\]
With these choices we get
\begin{equation}
\label{eq:finalest}
\int_0^1 \cA_N(\rho^3_{N,t},V^3_{N,t})\leq\eps^2+\cWN(\rho_{N,0},\rho_{N,1})^2\;.
\end{equation}
Furthermore, the inequalities \eqref{eq:belowstep1}, \eqref{eq:step2bounds1}, and \eqref{eq:step3bound} and the inequalities  \eqref{eq:lipstep1}, \eqref{eq:step2bounds2} and \eqref{eq:step3lip} imply that 
\begin{align*}
 \min \rho^3_{N,t} \geq a\;, \qquad
  \Lipn(\rho^3_{N,t})\leq \max\{ \delta^{-1},  {C}(b) \}\;,
\end{align*}
hence $\rho^3_{N,t}$ belongs to $\prob_{\bar\delta}(\LN)$ for some $\bar\delta$ depending on $a,b$ and $\delta$.
 The result follows in view of Definition \ref{def:wdelta} of $\mathcal W_{N,\bar\delta}$.
\end{proof}

\subsection{Wrap up and conclusion of the argument}
\label{subsec:wrapup}

Finally we shall prove Theorem \ref{thm:main}. Let us first recall one of the equivalent characterisations of Gromov-Hausdorff convergence, which we formulate here as a definition. We refer to, e.g., \cite[Definition 27.6 and (27.4)] {Vil09}) for more details. 

\begin{definition}[Gromov-Hausdorff Convergence]\label{def:GH}
We say that a sequence of compact metric spaces $(\cX_n,d_n)$ converges in the sense of Gromov-Hausdorff to a compact  metric space $(\cX,d)$, if there exists a sequence of maps $f_n : \cX\to \cX_n $ which are 
\begin{enumerate}[$(i)$]
\item $\eps_n$-isometric, i.e., for all $x, y \in \cX$,
\begin{align*}
  |d_n(f_n(x), f_n(y)) - d(x,y)| \leq \eps_n\;; \qquad\qquad \text{and }
\end{align*}
\item $\eps_n$-surjective, i.e., for all $x_n \in \cX_n$ there exists $x \in \cX$ with 
\begin{align*}
d(f_n(x),x_n) \leq \eps_n\;,
\end{align*}
\end{enumerate}
for some sequence $\eps_n \to 0$.
\end{definition}

Now we are ready to prove our main result Theorem \ref{thm:main}, which we restate for the convenience of the reader.

\begin{theorem*}
Let $d \geq 1$. Then the metric spaces $(\prob(\LN),\mathcal W_N)$ converge to $(\prob(\I),W_2)$ in the sense of Gromov-Hausdorff as $N \to \infty$.
\end{theorem*}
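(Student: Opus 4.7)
The plan is to define maps $f_N : \cP(\I) \to \cP(\LN)$ by $f_N(\mu) := \PN(\sfh_{s_N}\mu)$ for a sequence $s_N \to 0$ to be tuned, and verify both halves of Definition \ref{def:GH} with a suitable sequence $\eps_N \to 0$. Surjectivity is the easier half: for $\rho^N \in \cP(\LN)$, the measure $\mu := \QN(\rho^N)$ satisfies $\PN(\mu) = \rho^N$ because $\PN\circ\QN = \mathrm{id}$, so Proposition \ref{prop:uniform}(ii) combined with Lemma \ref{lem:Wass-comparision} and Proposition \ref{prop:heatcont}(i) yields
\[
 \cWN(f_N(\mu),\rho^N) \leq c\sqrt{d}\,W_{2,N}(\PN\sfh_{s_N}\mu,\PN\mu) \leq c\sqrt{d}\bigl(C\sqrt{s_N} + \tfrac{\sqrt{d}}{N}\bigr),
\]
which tends to $0$ as soon as $s_N\to 0$.

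For the isometry property, the upper defect $\cWN(f_N(\mu_0),f_N(\mu_1)) - W_2(\mu_0,\mu_1) \leq C(s_N)/\sqrt{N}$ is immediate from Proposition \ref{prop:upper}. The lower defect is handled by chaining the estimates of Section \ref{subsec:estim}. Parts (i)--(ii) of Proposition \ref{prop:heatcont} together with the regularity-preservation property \eqref{eq:pnreg} ensure $f_N(\mu) \in \D_{\delta_0}(\LN)$ for some $\delta_0 = \delta_0(s_N)>0$. For any auxiliary parameter $\eps>0$, Proposition \ref{prop:hard} applied with $\delta = \delta_0$ produces $\bar\delta = \bar\delta(s_N,\eps)$ (which we may assume satisfies $\bar\delta\leq\delta_0$, so that $f_N(\mu_j)\in\D_{\bar\delta}(\LN)$ for $j=0,1$) such that
\[
 \mathcal W_{N,\bar\delta}(f_N(\mu_0),f_N(\mu_1)) \leq \cWN(f_N(\mu_0),f_N(\mu_1)) + \eps.
\]
Combining the triangle inequality for $W_2$, the bound $W_2(\mu_j,\QN f_N(\mu_j)) \leq C\sqrt{s_N} + \sqrt{d}/N$ arising from Proposition \ref{prop:heatcont}(i) and Lemma \ref{lem:QP}, Proposition \ref{prop:lower}, and Proposition \ref{prop:wtildewdelta} applied with parameter $\bar\delta$, we arrive at
\[
 W_2(\mu_0,\mu_1) \leq 2C\sqrt{s_N} + \tfrac{2\sqrt{d}}{N} + \bigl(1-\tfrac{1}{\bar\delta^4 N^2}\bigr)^{-1/2}\bigl[\cWN(f_N(\mu_0),f_N(\mu_1)) + \eps\bigr].
\]
Since $(\cP(\LN),\cWN)$ has uniformly bounded diameter by Proposition \ref{prop:uniform}(ii), the multiplicative surplus $(1-(\bar\delta^4N^2)^{-1})^{-1/2}-1$ multiplies a uniformly bounded quantity and vanishes as $\bar\delta^4 N^2 \to \infty$.

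It then remains to choose $s_N\to 0$ and $\eps = \eps_N\to 0$ slowly enough that the four quantities $C(s_N)/\sqrt{N}$, $\sqrt{s_N}$, $\eps_N$ and $(\bar\delta(s_N,\eps_N)^4 N^2)^{-1}$ all tend to $0$; the resulting $\eps_N$ then controls both halves of the isometry condition and also the surjectivity defect. Such a sequence is produced by a standard diagonal argument: for each $k\in\N$ pick $s^{(k)},\eps^{(k)}$ small, read off the implicit constants $C(s^{(k)})$ and $\bar\delta(s^{(k)},\eps^{(k)})$, choose $N_k$ large enough that all four quantities lie below $1/k$ for $N\geq N_k$, and splice the sequences together on the intervals $[N_k,N_{k+1})$.

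All the genuinely delicate analytic work has been absorbed into the propositions of Section \ref{subsec:estim}; the remaining obstacle is a bookkeeping one. The parameter $\bar\delta$ furnished by Proposition \ref{prop:hard} depends implicitly on $s_N$ and $\eps_N$ in an unquantified way, so one cannot prescribe $s_N$ as an explicit function of $N$ and must instead use the diagonal procedure above to keep $(\bar\delta^4 N^2)^{-1}$ under control while also forcing $s_N$ to $0$ fast enough that the $W_2$-heat flow error $\sqrt{s_N}$ vanishes and slow enough that the quantitative constant $C(s_N)$ in the upper defect remains controlled relative to $\sqrt N$.
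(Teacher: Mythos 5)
Your proposal is correct and follows essentially the same chain of estimates as the paper: the upper defect via Proposition \ref{prop:upper}, and the lower defect via Proposition \ref{prop:heatcont}(i) and \eqref{eq:pnreg} to land in $\D_{\delta_0}(\LN)$, then Proposition \ref{prop:hard}, Proposition \ref{prop:wtildewdelta}, Proposition \ref{prop:lower}, and Lemma \ref{lem:QP}, with surjectivity handled by $\mu=\QN(\rho^N)$ exactly as in the paper. The one place you add value is in spelling out the diagonal argument that extracts a single sequence $s_N\to 0$ from the collection of thresholds $\bar N(s)$ and the unquantified dependence of $\bar\delta$ on $(s,\eps)$; the paper compresses this into the sentence ``this suffices to prove the theorem.''
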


\begin{proof}
For $s>0$ and $N \geq 1$ we consider the map from $\mathscr P(\I)$ to $\mathscr P(\LN)$ given by
\[
\mu\ \mapsto\ \PN({\sfh}_s \mu)\;.
\]
We claim that for each $s > 0$ there exists $\bar N(s) \geq 1$ such that for all $N \geq \bar N(s)$ this map is both $\eps(s)$-isometric and $\eps(s)$-surjective, for some sequence $\eps(s)\downarrow 0$ as $s \downarrow 0$.
This suffices to prove the theorem.

\medskip

\noindent{\bf $\eps(s)$-isometry}. 
Let $\mu_0, \mu_1 \in \cP(\I)$. Part $(i)$ of Proposition \ref{prop:heatcont} in conjunction with \eqref{eq:pnreg} yields that $\PN({\sfh}_s\mu_0)$ and $\PN({\sfh}_s\mu_1)$ belong to $\D_{\delta(s)}(\LN)$ for some $\delta(s)>0$ and for any $N\geq 1$. Let $\eta > 0$. From Proposition \ref{prop:hard} we then get the existence of $\bar\delta(\eta,s) > 0$ such that
\begin{align*}
 \mathcal W_{N,\bar\delta(\eta,s)}\big(\PN({\sfh}_s\mu_0), \PN({\sfh}_s \mu_1)\big) \leq \cWN\big(\PN({\sfh}_s\mu_0), \PN({\sfh}_s \mu_1)\big)+\eta\;.
\end{align*}
From Proposition \ref{prop:wtildewdelta} we infer that
\begin{align*}
\widetilde{\mathcal W}_{N}&\big(\PN({\sfh}_s\mu_0), \PN({\sfh}_s\mu_1)\big)\leq\left(1-\frac{1}{\bar\delta(\eta,s)^4N^2}\right)^{-\frac12} \mathcal W_{N,\bar\delta(\eta,s)}\big(\PN({\sfh}_s\mu_0), \PN({\sfh}_s\mu_1)\big)\;,
\end{align*}
and then from Proposition \ref{prop:lower} that
\begin{align*}
W_2\big(\QN(\PN({\sfh}_s\mu_0)),\QN(\PN({\sfh}_s\mu_1))\big)
\leq \widetilde{\mathcal W}_{N}\big(\PN({\sfh}_s\mu_0), \PN({\sfh}_s\mu_1)\big)\;.
\end{align*}
Lemma \ref{lem:QP} and Proposition \ref{prop:heatcont}$(i)$ yield
\begin{align*}
W_2(\mu_0,\mu_1)
 \leq 
W_2\big(\QN (\PN({\sfh}_s\mu_0)),\QN(\PN({\sfh}_s\mu_1))\big)
 + 2C\sqrt{s} + 2\frac{\sqrt{d}}{N} \;.
\end{align*}
Combining these four inequalities, we obtain
\begin{align*}
W_2(\mu_0,\mu_1)\leq \left(1-\frac{1}{\bar\delta(\eta,s)^4N^2}\right)^{-\frac12}\Big(\cWN\big(\PN({\sfh}_s\mu_0), \PN({\sfh}_s\mu_1)\big)+\eta\Big) + 2C\sqrt{s} + 2\frac{\sqrt{d}}{N}\;.
\end{align*}
On the other hand, Proposition \ref{prop:upper} grants that 
\begin{align*}
\cWN\big(\PN({\sfh}_s\mu_0), \PN({\sfh}_s\mu_1)\big)\leq  W_2(\mu_0, \mu_1) +  \frac{C(s)}{\sqrt{N}}\;.
\end{align*} 
Taking Proposition \ref{prop:uniform}(ii) into account, the latter two inequalities yield that for $\bar N = \bar N(s)$ sufficiently large and $\eta = \eta(s)$ sufficiently small, we have for all $N \geq \bar N(s)$,
\begin{align*}
\Big|W_2(\mu_0,\mu_1)- \cWN\big(\PN({\sfh}_s\mu_0), \PN({\sfh}_s\mu_1)\big)\Big|\leq \eps(s)
\end{align*}
for some $\eps(s)\downarrow 0$ as $s\downarrow0$.
\medskip

\noindent{\bf $\eps(s)$-surjectivity}. 
Let $\rhon \in \cP(\LN)$ and set $\rhon_s :=  {\sfh}_s \QN (\rhon)$. Then, for some dimensional constant $C < \infty$ which may change from line to line,
we obtain using Proposition \ref{prop:uniform}$(ii)$, Lemma \ref{lem:Wass-comparision}, and Proposition \ref{prop:heatcont}$(i)$, %
\begin{align*}
 \cWN\big(\rhon, \PN(\rhon_s)\big)
   & = \cWN\big(\PN (\QN(\rhon)), \PN(\rhon_s) \big)
 \\& \leq C W_{2,N}\big(\PN (\QN (\rhon)), \PN(\rhon_s) \big)
 \\& \leq C W_{2}\big(\QN (\rhon), \rhon_s \big) + \frac{C}{N}
 \\& \leq C \Big( \sqrt{s} + \frac{1}{N}\Big)\;.
\end{align*}
Taking, say, $N = 1/\sqrt{s}$, we infer that $\PN \circ {\sfh}_s$ is $2C\sqrt{s}$-surjective, which completes the proof.
\end{proof}

\bibliographystyle{plain}
\bibliography{GH-conv}

 \end{document}